\newcommand*\patchAmsMathEnvironmentForLineno[1]{%
  \expandafter\let\csname old#1\expandafter\endcsname\csname #1\endcsname
  \expandafter\let\csname oldend#1\expandafter\endcsname\csname end#1\endcsname
  \renewenvironment{#1}%
     {\linenomath\csname old#1\endcsname}%
     {\csname oldend#1\endcsname\endlinenomath}}%
\newcommand*\patchBothAmsMathEnvironmentsForLineno[1]{%
  \patchAmsMathEnvironmentForLineno{#1}%
  \patchAmsMathEnvironmentForLineno{#1*}}%
\newtheorem{thm}{Theorem}[section]
\newtheorem{dfn}[thm]{Definition}
\newtheorem{lma}[thm]{Lemma}
\newtheorem{cor}[thm]{Corollary}
\newtheorem{prp}[thm]{Proposition}
\newtheorem{clm}[thm]{Claim}
\newtheorem{conj}[thm]{Conjecture}
\title{A multipartite version of the Hajnal--Szemer\'{e}di theorem for graphs and hypergraphs
\footnote{2010 Mathematics Subject Classification: Primary 05C65, 05C70, 05C07.
Key words and phrases: Hajnal--Szemer\'{e}di, partite, $k$-graph, perfect $K_k^t$-matching, minimum degree}}
\author{Allan Lo
\footnote{This author was supported by the ERC, grant no.~258345.}
}
\affil{School of Mathematics, University of Birmingham, \\Birmingham, B15 2TT, UK\\
\texttt{s.a.lo@bham.ac.uk}}
\author{Klas Markstr{\"o}m}
\affil{Department of Mathematics and Mathematical Statistics,\\ Ume\r{a} University, S-901 87 Ume\r{a}, Sweden\\
\texttt{klas.markstrom@math.umu.se}}
\begin{document}


\maketitle 

\abstract{
A perfect $K_t$-matching in a graph $G$ is a spanning subgraph consisting of vertex disjoint copies of~$K_t$.
A classic theorem of Hajnal and Szemer\'{e}di states that if $G$ is a graph of order $n$ with minimum degree $\delta(G) \ge (t-1)n/t$ and $t| n$, then $G$ contains a perfect $K_t$-matching.
Let $G$ be a $t$-partite graph with vertex classes $V_1$, \dots, $V_t$ each of size~$n$.
We show that for any $\gamma >0$ if every vertex $x \in V_i$ is joined to at least $\left((t-1)/t + \gamma\right)n $ vertices of $V_j$ for each $j \ne i$, then $G$ contains a perfect $K_t$-matching, provided $n$ is large enough.
Thus, we verify a conjecture of Fisher~\cite{MR1698745} asymptotically.
Furthermore, we consider a generalisation to hypergraphs in terms of the codegree.
}

\section{Introduction}

Given a graph $G$ and an integer $t \ge 3$, a \emph{$K_t$-matching} is a set of vertex disjoint copies of~$K_t$ in~$G$.
A \emph{perfect $K_t$-matching} (or \emph{$K_t$-factor}) is a spanning $K_t$-matching.
Clearly, if $G$ contains a perfect $K_t$-matching then $t$ divides~$|G|$.
A classic theorem of Hajnal and Szemer\'{e}di~\cite{MR0297607} states a relationship between the minimum degree and the existence of a perfect $K_t$-matching.
\begin{thm}[Hajnal--Szemer\'{e}di Theorem~\cite{MR0297607}]
Let $t>2$ be an integer.
Let $G$ be a graph of order $n$ with minimum degree $\delta(G) \ge (t-1)n/t$ and $t| n$.
Then $G$ contains a perfect $K_t$-matching.
\end{thm}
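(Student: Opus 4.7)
The plan is to proceed by induction on $n$ with $t$ fixed, combined with an augmentation argument on $K_t$-matchings. The base case $n=t$ is immediate since $\delta(G)\ge t-1$ forces $G=K_t$. For the inductive step it would suffice to locate a single copy of $K_t$ whose removal leaves a graph on $n-t$ vertices still satisfying $\delta\ge (t-1)(n-t)/t$; the hypothesis would then persist and induction finishes the job.

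To locate such a $K_t$, I would first produce, by a greedy peeling argument, a ``near-perfect'' $K_t$-matching $\mathcal M$ covering all but at most $t-1$ vertices: the minimum-degree condition lets us repeatedly find a $K_t$ in the leftover graph via a Tur\'an-type count until fewer than $t$ vertices remain. Among all such matchings, choose $\mathcal M$ to minimize the uncovered set $U$. If $U=\emptyset$ we are done, so assume $U\ne\emptyset$ and pick $u\in U$. Because $u$ has at least $(t-1)n/t$ neighbors in $V\setminus U$, most copies $T\in\mathcal M$ are \emph{helpful} for $u$, meaning $T$ contains at least $t-1$ neighbors of $u$; in each such $T$ there is a unique non-neighbor $x\in T$ of $u$, and one can swap $T$ for $T':=(T\setminus\{x\})\cup\{u\}$, transferring the defect from $u$ to $x$ while preserving $|\mathcal M|$.

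The proof is then driven by an auxiliary directed graph whose vertices are $U\cup V(\mathcal M)$ and whose arcs record permissible single-swap moves of this type. The goal is to show that, under the degree hypothesis, one can reach from $u$ either a vertex whose uncovering directly enlarges $\mathcal M$ (contradicting maximality) or a chain of swaps ending in a genuinely improving configuration. The main obstacle, and the delicate combinatorial heart of the argument, is ruling out the ``stuck'' case: a configuration in which every $K_t$ of $\mathcal M$ contains some irreplaceable vertex that blocks all augmenting sequences. Showing this cannot arise at the sharp threshold $\delta\ge(t-1)n/t$ is precisely where the tight bound is used, and is the step I would expect to require the most care; it is also the reason why extensions such as the multipartite and hypergraph versions considered in this paper demand substantially new ideas rather than a direct adaptation.
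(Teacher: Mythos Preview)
The paper does not contain a proof of this statement: the Hajnal--Szemer\'edi theorem is quoted as a classical 1970 result and used as motivation, so there is nothing to compare your attempt against on the paper's side. What the paper actually proves are the asymptotic multipartite and hypergraph analogues (Theorems~\ref{thm:hajnalszemeredi} and~\ref{thm:partite}), via the absorption method together with fractional matchings, and that machinery is not applied to the classical non-partite statement at all.

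As for the proposal itself, it is not a proof but a sketch with real gaps. First, the induction frame in your opening paragraph is problematic: removing an arbitrary $K_t$ can lower the degree of a surviving vertex by $t$, while the threshold only drops by $t-1$, so you would need to find a copy of $K_t$ with no common neighbour outside it, and you never return to justify this. Second, the claim that a greedy Tur\'an-type peeling yields a $K_t$-matching covering all but at most $t-1$ vertices is false at the sharp threshold: once you have removed $s$ cliques, the minimum degree in the leftover graph may be as low as $(t-1)n/t - st$, and a quick computation shows this falls below the Tur\'an density long before $n-st$ reaches $t-1$ (roughly when $st\approx n/t$). Getting a matching missing only $O(1)$ vertices is essentially as hard as the full theorem. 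Third, you explicitly acknowledge that the decisive step---ruling out the ``stuck'' augmentation configuration at the exact bound---is where the real work lies, and you do not carry it out. The known short proof (Kierstead--Kostochka) does proceed via an extremal/augmentation argument, but through equitable colourings and a carefully designed discharging-style analysis, not the loose swap digraph you describe.
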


Let $G$ be a $t$-partite graph with vertex classes $V_1$, \dots, $V_t$.
We say that $G$ is \emph{balanced} if $|V_i| = |V_j|$ for $1 \le i < j \le t$.
Write $G[V_i,V_j]$ for the induced bipartite subgraph on vertex classes $V_i$ and $V_j$.
Define $\widetilde{\delta}(G)$ to be $\min_{1 \le i < j \le t} \delta(G[V_i,V_j])$.
Fischer~\cite{MR1698745} conjectured the following multipartite version of the Hajnal--Szemer\'{e}di theorem.

\begin{conj}[Fischer~\cite{MR1698745}] \label{conj:hajnalszemeredi}
Let $G$ be a balanced $t$-partite graph with each class of size~$n$.
Then there exists an integer $a_{n,t}$ such that if $\widetilde{\delta}(G) \ge (t-1)n/t + a_{n,t}$, then $G$ contains a perfect $K_t$-matching.
\end{conj}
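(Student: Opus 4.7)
The plan is to prove the asymptotic weakening promised in the abstract, namely that $\widetilde{\delta}(G)\ge((t-1)/t+\gamma)n$ suffices for a perfect $K_t$-matching once $n$ is large. The exact form of the conjecture, with the additive constant $a_{n,t}$, is left open and would require a much finer extremal analysis; the route I would take is the \emph{absorbing method} of R\"odl--Ruci\'nski--Szemer\'edi combined with a multipartite version of Szemer\'edi's regularity lemma.

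\textbf{Step 1 (Absorbing lemma).} I would first construct a balanced $t$-partite set $A\subseteq V(G)$, with $|A\cap V_i|=a$ for each $i$ and $a=o(n)$, such that $G[A]$ has a perfect $K_t$-matching \emph{and}, for every balanced $t$-partite leftover $R\subseteq V(G)\setminus A$ with $|R\cap V_i|\le a/(2t)$, the subgraph $G[A\cup R]$ still has a perfect $K_t$-matching. The atomic object is an \emph{absorber} for a tuple $\mathbf{v}=(v_1,\dots,v_t)\in V_1\times\cdots\times V_t$: a balanced $t$-partite set $S_{\mathbf{v}}$ of bounded size, disjoint from $\mathbf{v}$, such that both $G[S_{\mathbf{v}}]$ and $G[S_{\mathbf{v}}\cup\{v_1,\dots,v_t\}]$ admit perfect $K_t$-matchings. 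Using $\widetilde{\delta}(G)\ge((t-1)/t+\gamma)n$ and iterated common-neighbourhood estimates (each vertex misses at most $(1/t-\gamma)n$ of any other class, so inclusion--exclusion produces joint neighbourhoods of positive linear size), I would argue that every $\mathbf{v}$ has $\Omega(n^s)$ absorbers of some bounded size $s$; a random sample followed by the standard Markov-plus-alteration argument then yields $A$.

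\textbf{Step 2 (Almost-cover by $K_t$'s).} Put $G':=G-A$, which still satisfies $\widetilde{\delta}(G')\ge((t-1)/t+\gamma/2)(n-a)$. Apply the multipartite regularity lemma to the $\binom{t}{2}$ bipartite pieces $G'[V_i,V_j]$. The same non-neighbourhood count shows that the reduced $t$-partite graph $\mathcal{R}$ contains $\Omega(|\mathcal{R}|^t)$ copies of $K_t$, so a greedy cluster-packing, coupled with the Blow-up or Counting Lemma applied inside the resulting super-regular blocks, converts this into a $K_t$-packing of $G'$ covering all but $\eta n$ vertices from each class, where $\eta$ can be taken arbitrarily small at the cost of enlarging $a$. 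Finally, in \textbf{Step 3}, the leftover $L$ has $|L\cap V_i|\le \eta n$ for each $i$; rebalancing (if needed) by dropping $O(1)$ $K_t$'s from the packing, and using $\eta n\le a/(2t)$, the absorbing property of $A$ completes $A\cup L$ to a perfect $K_t$-matching, and together with the packing from Step~2 this yields a perfect $K_t$-matching of $G$.

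\textbf{Main obstacle.} The heart of the argument is Step~1: showing that \emph{every} balanced $t$-tuple $\mathbf{v}$ admits polynomially many absorbers. One cannot in general find a $K_t$ on $\{v_1,\dots,v_t\}$ itself, since the hypothesis only controls neighbourhoods across pairs of classes. Instead, one must design a "swap gadget" — a few $K_t$'s whose vertex set can be repartitioned into $K_t$'s, one of which contains $\mathbf{v}$ — and exhibit $\Omega(n^s)$ such gadgets using only pairwise minimum-degree hypotheses. This is where the multipartite condition interacts nontrivially with the clique structure, and where an analogue of the reachability/connectivity arguments used in the non-partite absorbing proofs has to be rebuilt in the $t$-partite setting. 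A secondary subtlety is that the reduced graph in Step~2 only inherits the multipartite degree condition, so one cannot invoke Hajnal--Szemer\'edi inside it; this is the reason Step~2 targets an almost-perfect, rather than perfect, packing, with the deficit absorbed by $A$.
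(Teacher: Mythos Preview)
Your overall framework---absorbing set, then almost-perfect $K_t$-packing on the remainder, then absorb the leftover---is exactly the paper's, and your Step~1 matches the paper's absorption lemma: the ``swap gadget'' is precisely the observation (Proposition~\ref{prp:close}) that any two vertices in the same class have $(\gamma n)^{t-1}$ common $K_{t-1}$-extensions, from which the absorber count (Lemma~\ref{lma:absorptionlemma}) follows routinely.

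The gap is in Step~2. A greedy cluster-packing in the reduced graph $\mathcal{R}$ does \emph{not} produce an almost-perfect $K_t$-matching there. After removing $m$ disjoint copies of $K_t$ from a balanced $t$-partite graph with class size $M$ and $\widetilde\delta \ge ((t-1)/t+\gamma')M$, the iterated common-neighbourhood bound guaranteeing a further $K_t$, namely $(t-1)(\widetilde\delta-m)>(t-2)(M-m)$, fails once $m\approx M/t+(t-1)\gamma'M$, so greedy leaves a constant fraction of clusters uncovered. You correctly diagnose that Hajnal--Szemer\'edi cannot be invoked inside $\mathcal{R}$, but an almost-perfect $K_t$-packing in a $t$-partite host under the multipartite degree hypothesis is essentially the statement being proved; as written, Step~2 is circular, and ``$\eta$ can be taken arbitrarily small'' is not justified.

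The paper's Step~2 is genuinely different and uses no regularity lemma. It first establishes a \emph{fractional} result (Theorem~\ref{thm:fractionalKq}) via Farkas' Lemma: $\widetilde\delta_{k-1}(H)\ge\lceil(t-1)n/t\rceil$ forces a perfect fractional $K_t$-matching. Then (Lemma~\ref{lma:approximate}) it samples $n^{1.1}$ small random balanced sub-systems, each inheriting the degree hypothesis and hence carrying a perfect fractional $K_t$-matching; these are superposed into a nearly regular, small-codegree $t$-uniform auxiliary hypergraph, and the Frankl--R\"odl theorem (Theorem~\ref{thm:FranklRodl}) converts this to a near-perfect integral packing. This LP-duality-plus-nibble step is the paper's main technical contribution; the absorber construction you flagged as the main obstacle is in fact the routine half. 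Your regularity route can be repaired, but only by inserting exactly this fractional step inside $\mathcal{R}$---which is the idea you are missing.
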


Note that the $+ a_{n,t}$ term was not presented in Fischer's original conjecture, but it was shown to be necessary for odd~$t$ in~\cite{MR1910115}.
For $t=2$, the conjecture can be easily verified by Hall's Theorem.
For $t=3$, Johansson~\cite{MR1735337} proved that $\widetilde{\delta}(G) \ge 2n/3 +\sqrt{n}$ suffices for all~$n$.
Using the regularity lemma, Magyar and Martin~\cite{MR1910115}, and Martin and Szemer{\'e}di~\cite{MR2433861} proved Conjecture~\ref{conj:hajnalszemeredi} for $t=3$ and $t=4$ respectively for $n$ sufficiently large, where $a_{n,t} = 1$ if both $t$ and $n$ are odd, $a_{n,t}=0$ otherwise.
For $t \ge 5$, Csaba and Mydlarz~\cite{csaba2008approximate} proved that $\widetilde{\delta}(G) \ge c_t n/ (c_t+1)$ is sufficient, where $c_t = t - 3/2 + (1+ 1/2 + \dots + 1/t)/2$.
In this paper, we show that Conjecture~\ref{conj:hajnalszemeredi} is true asymptotically.

\begin{thm} \label{thm:hajnalszemeredi}
Let $t \ge 2$ be an integer and let $\gamma> 0$.
Then there exists an integer $n_0 = n_0(t,\gamma)$ such that if $G$ is a balanced $t$-partite graph with each class of size~$n \ge n_0 $ and $\widetilde{\delta}(G) \ge \left( (t-1)/t+\gamma \right)n $, then $G$ contains a perfect $K_t$-matching.
\end{thm}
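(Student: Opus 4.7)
The strategy is the absorbing method of R\"odl--Ruci\'nski--Szemer\'edi. Fix constants $0 < \eta \ll \gamma$. The plan combines a small \emph{absorbing} $K_t$-matching $M_A$ with an \emph{almost-perfect} $K_t$-matching on the rest, and then uses $M_A$ to soak up the few leftover vertices.

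\emph{Step 1: absorbing matching.} Call $A \subseteq V(G) \setminus \{v_1,\dots,v_t\}$ an \emph{absorber} for a partite $t$-tuple $\mathbf{v}=(v_1,\dots,v_t) \in V_1 \times \cdots \times V_t$ if both $G[A]$ and $G[A \cup \{v_1,\dots,v_t\}]$ admit a perfect $K_t$-matching. The heart of this step is an absorber counting lemma: under $\widetilde{\delta}(G) \ge ((t-1)/t+\gamma)n$, every partite $t$-tuple has at least $c\, n^{s}$ absorbers of some bounded size~$s$. A natural gadget uses $t$ vertex-disjoint copies of $K_t$ together with a few additional ``rotation'' $K_t$'s, arranged so that the union can be repartitioned into one extra clique once $\mathbf{v}$ is added. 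Building each $K_t$ of the gadget iteratively from common neighbourhoods of at most $t-1$ vertices chosen from the various classes---and using that each such intersection retains $\Theta(\gamma n)$ vertices in the desired class---yields $\Omega(n^s)$ absorbers per tuple. A standard random selection (pick each candidate independently with appropriate probability, then discard conflicts by Markov/Chebyshev) produces a $K_t$-matching $M_A$ of size at most $\eta n$ with the \emph{absorbing property}: for every balanced $W \subseteq V(G) \setminus V(M_A)$ with $|W \cap V_i| \le \eta^2 n$, the graph $G[V(M_A) \cup W]$ contains a perfect $K_t$-matching.

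\emph{Step 2: almost-perfect matching.} Set $G' = G - V(M_A)$; since only $O(\eta n)$ vertices were removed from each class, $\widetilde{\delta}(G') \ge ((t-1)/t + \gamma/2)|V_i(G')|$. I would apply Szemer\'edi's regularity lemma to $G'$, form the reduced $t$-partite graph $R$, which inherits $\widetilde{\delta}(R) \ge ((t-1)/t + \gamma/4)|V_i(R)|$, and find an almost-spanning $K_t$-factor of $R$ using (say) a Csaba--Mydlarz-type argument or an LP-based fractional $K_t$-factor combined with the blow-up lemma. Each $K_t$ of $R$ is then realised as many vertex-disjoint $K_t$'s inside the corresponding regular $t$-tuple of $G'$, producing a $K_t$-matching of $G'$ that leaves at most $\eta^2 n$ vertices of each class uncovered; a minor adjustment (remove $O(1)$ cliques) corrects any residual imbalance. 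The absorbing property of $M_A$ then completes a perfect $K_t$-matching of $G$.

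\emph{Main obstacle.} The crux is the absorber counting lemma. The partite restriction forces every $K_t$ to contain exactly one vertex from each class, so absorbing gadgets are much more rigid than in the non-partite Hajnal--Szemer\'edi setting, and the slack $\gamma n$ must be distributed carefully across $t$ simultaneous class-intersections. I expect to need a ``partite connecting lemma'' guaranteeing $\Omega(n^r)$ short vertex-disjoint $K_t$-paths between any two partite $t$-tuples; proving this at the essentially tight minimum-degree bound $((t-1)/t+\gamma)n$ is the principal difficulty of the argument.
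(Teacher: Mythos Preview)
Your overall plan is correct and follows the same absorption skeleton as the paper, but there are two notable differences.

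For Step~2 the paper avoids the regularity lemma entirely. It first proves directly, via Farkas' lemma, that the \emph{fractional} perfect-$K_t$-matching threshold in balanced $t$-partite graphs is exactly $\lceil (t-1)n/t\rceil$; then it covers $V(G')$ by $\approx n^{1.1}$ random balanced sub-$t$-tuples of size $\approx n^{0.1}$ each, takes a perfect fractional $K_t$-matching inside every sub-tuple, and feeds the resulting nearly regular $t$-uniform hypergraph of cliques into the Frankl--R\"odl nibble to extract the almost-perfect integer matching. Your regularity/blow-up route also works, but the paper's route gives a far smaller $n_0$ (the authors explicitly contrast this with the blow-up proof of Keevash--Mycroft). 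Note, incidentally, that invoking Csaba--Mydlarz on the reduced graph would not suffice, since their threshold $c_t/(c_t+1)$ strictly exceeds $(t-1)/t$; you genuinely need the fractional LP result, which is exactly the paper's Theorem~1.5.

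For Step~1 the absorber construction is simpler than you anticipate, and your ``partite connecting lemma'' is not the bottleneck. The paper's connecting statement is a one-paragraph greedy count: for any $u,v$ in the same class $V_i$ there are at least $(\gamma n)^{t-1}$ transversals $T$ of the remaining classes with both $T\cup\{u\}$ and $T\cup\{v\}$ spanning a $K_t$, obtained by choosing the $w_j$'s one by one in the common neighbourhood (each step loses at most $t(n/t-\gamma n)<n$ vertices). An absorber of size $t(t-1)$ for $(v_1,\dots,v_t)$ is then built by first taking any $K_t$ through $v_1$, say $\{v_1,u_2,\dots,u_t\}$, and for each $j=2,\dots,t$ choosing a $(t-1)$-transversal $U_j$ that completes a $K_t$ with both $u_j$ and $v_j$. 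No rotation gadgets or $K_t$-paths are needed; the slack $\gamma n$ against at most $t$ simultaneous adjacency constraints per class is already enough at the threshold $(t-1)/t+\gamma$.
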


Independently, Theorem~\ref{thm:hajnalszemeredi} also has been proved by Keevash and Mycroft~\cite{keevash2011geometric}.
Their proof involves the hypergraph blowup lemma~\cite{keevashhypergraph}, so $n_0$ is extremely large, whereas our proof gives a much smaller $n_0$.
Since the submission of this paper, Keevash and Mycroft~\cite{keevash2012hajnal} have proved Conjecture~\ref{conj:hajnalszemeredi}, provided $n$ is large enough.
Also, Han and Zhao~\cite{HanZhao2012hajnal} gave a different proof of Conjecture~\ref{conj:hajnalszemeredi} for $t = 3,4$, again provided $n$ is large enough.

We further generalise Theorem~\ref{thm:hajnalszemeredi} to hypergraphs. 
For $a \in \mathbb{N}$, we refer to the set $\{1, \dots, a\}$ as $[a]$.
For a set $U$, we denote by $\binom{U}{k}$ the set of $k$-sets of~$U$.
A \emph{$k$-uniform hypergraph}, or \emph{$k$-graph} for short, is a pair $H = (V(H),E(H))$, where $V(H)$ is a finite set of vertices and $E(H) \subset \binom{V(H)}{k}$ is a family of $k$-sets of~$V(H)$.
We simply write $V$ to mean $V(H)$ if it is clear from the context.
For a $k$-graph $H$ and an $l$-set $T \in \binom{V}l$, let $N^H(T)$ be the set of $(k-l)$-sets $S \in \binom{V}{k-l}$ such that $S \cup T$ is an edge in $H$.
Let $\deg^H(T) = |N^H(T)|$.
Define the \emph{minimum $l$-degree} $\delta_l(H)$ of $H$ to be the minimal $\deg^H(T)$ over all $T \in \binom{V}l$.
For $U \subset V$, we denote by $H[U]$ the induced subgraph of $H$ on vertex set~$U$.

A $k$-graph $H$ is \emph{$t$-partite}, if there exists a partition of the vertex set $V$ into $t$ classes $V_1$, \dots, $V_t$ such that every edge intersects every class in at most one vertex.
Similarly, $H$ is \emph{balanced} if $|V_1| = \dots =|V_t|$.
An $l$-set $T \in \binom{V}l$ is said to be \emph{legal} if $| T \cap V_i| \le 1$ for $i \in [t]$.
For $I \subset [t]$, $T \subset V$ is \emph{$I$-legal} if $| T \cap V_i| = 1$ for $i \in I$ and $| T \cap V_i| = 0$ otherwise.
We write $V_I$ to be the set of $I$-legal sets.
For disjoint sets $I, J$ such that $I \cup J \in \binom{[t]}{k}$ and an $I$-legal set $T\in V_I$, denote by $N^H_J(T)$ the set of $J$-legal sets~$S$ such that $S \cup T$ is an edge in~$H$ and write $\deg^H_J(T) = |N^H_J(T)|$.
For $l \in [k-1]$ and $I \in \binom{[t]}{l}$, define $\widetilde{\delta}_I(H) = \min\{\deg^H_J(T) : T \in V_I $ and $ J \in \binom{[t] \backslash I}{ k-|I|}  \}$.
Finally, we set $\widetilde{\delta}_l(H) = \min\{\widetilde{\delta}_I(H) : I \in \binom{[t]}{l} \}$.
If $H$ is clear from the context, we drop the superscript of~$H$.
Note that for graphs, when $k=2$, $\widetilde{\delta}_1(G) = \widetilde{\delta}(G)$ as defined eariler.

Let $K_t^{k}$ be the complete $k$-graph on $t$ vertices.
It is easy to see that a $t$-partite $k$-graph $H$ contains a perfect $K^k_t$-matching only if $H$ is balanced.

\begin{dfn}
Let $1 \le l <k\le t$ and $n\ge 1$ be integers.
Define $\phi_l^k(t,n)$ to be the smallest integer $d$ such that every $t$-partite $k$-graph $H$ with each class of size $n$ and $\widetilde{\delta}_l(H) \ge d$ contains a perfect $K^k_t$-matching.
Equivalently, 
\begin{align}
\phi_l^k(t,n) = \min \{ d :  \textrm{ $\widetilde{\delta}_l(H) \ge d$} \Rightarrow \textrm{$H$ contains a perfect $K_t^{k}$-matching}\}, \nonumber
\end{align}
where $H$ is a $t$-partite $k$-graph $H$ with each class of size~$n$.
Write $\phi^k(t,n)$ for $\phi_{k-1}^k(t,n)$.
\end{dfn}

Note that Theorem~\ref{thm:hajnalszemeredi} implies that $\phi^2(t,n) \sim (t-1)n/t$.
Various cases of $\phi_l^k(k,n)$ have been studied.
Daykin and H{\"a}ggkvist~\cite{MR615135} showed that $\phi^k_1(k,n)  \le (k-1)n^{k-1}/k$, which was later improved by H\'{a}n, Person and Schacht~\cite{MR2496914}.
K{\"u}hn and Osthus~\cite{MR2207573} showed that $n/2-1< \phi^k(k,n) = \phi_{k-1}^k(k,n) \le n/2 +  \sqrt{2n \log n}$.
Aharoni, Georgakopoulos and Spr{\"u}ssel~\cite{MR2460215} then reduced the upper bound to $\phi^k(k,n)  \le \lceil (n+1)/2 \rceil$.
For $k/2 \le l <k-1$, Pikhurko~\cite{MR2438870} showed that $\phi_{l}^k(k,n) \le n^{k-l}/2$.
The exact value of $\phi^3_1(3,n)$ has been determined by the authors in~\cite{lo2011perfect}.
In this paper, we give an upper bound on $\phi^k(t,n)$ for $3 \le k<t$.

\begin{thm} \label{thm:partite}
For $3 \le k < t$ and $\gamma \ge 0$, there exists an integer $n_0 = n_0(k,t,\gamma)$ such that for all  $n\ge n_0$
\begin{align}
\phi^k(t,n) \le \left( 1 - \left( \binom{t-1}{k-1} + 2 \binom{t-2}{k-2} \right)^{-1} + \gamma\right)n. \nonumber
\end{align}
\end{thm}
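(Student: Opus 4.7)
I would prove Theorem~\ref{thm:partite} via the absorbing method. Let $c := \binom{t-1}{k-1} + 2\binom{t-2}{k-2}$, fix a parameter $\eta \ll \gamma$, and let $H$ denote a balanced $t$-partite $k$-graph on classes of size $n$ with $\widetilde{\delta}_{k-1}(H) \ge (1 - 1/c + \gamma)n$. The proof falls into two lemmas and a short combination step.

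The first and main ingredient is an \emph{absorbing lemma}: I would construct a small set $A \subseteq V(H)$ with $|A \cap V_i|$ equal across all $i$ and at most $\eta n$, such that for every balanced legal $L \subseteq V(H) \setminus A$ with $|L \cap V_i| \le \eta^2 n$, the induced subgraph $H[A \cup L]$ admits a perfect $K_t^k$-matching. Following the standard recipe, it suffices to show that every legal $t$-tuple $T = (v_1, \dots, v_t)$ admits $\Omega(n^r)$ \emph{absorbers}, namely constant-size vertex sets $A_T \subseteq V(H) \setminus T$ with both $H[A_T]$ and $H[A_T \cup T]$ spanning perfect $K_t^k$-matchings; a Chernoff-type random selection plus deletion of conflicts then yields $A$. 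The natural absorber uses $r = 2t$ and consists of two auxiliary legal $t$-tuples, so that $A_T$ decomposes into two $K_t^k$-copies while the $3t$-set $A_T \cup T$ decomposes into three $K_t^k$-copies via single- and pair-swaps exchanging vertices of $T$ with their counterparts in $A_T$. The constant $c$ is precisely the number of $(k-1)$-codegree conditions enforced by such an absorber: $\binom{t-1}{k-1}$ from edges through each single swapped vertex, and $2\binom{t-2}{k-2}$ from edges through two further pair-swaps needed to complete the partition. The codegree hypothesis $(1 - 1/c + \gamma)n$ is therefore exactly what gives $\Omega(n^{2t})$ absorbers per $T$.

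The second ingredient is an \emph{almost-perfect-matching lemma}: in $H' = H \setminus A$, whose codegree condition is essentially unchanged, I would find a $K_t^k$-matching $M'$ covering all but $\eta^2 n$ vertices per class. This may be done by iteratively extracting $K_t^k$-copies one at a time using the codegree condition (which persists on the shrinking graph, with balance across classes automatic since each copy uses one vertex per class), or by passing to the $t$-partite $t$-uniform auxiliary hypergraph whose edges are the legal $t$-sets spanning $K_t^k$ in $H'$ and invoking a known near-perfect matching result for dense multipartite hypergraphs. The leftover $L := V(H') \setminus V(M')$ is then a balanced legal set of size at most $\eta^2 n$, and the absorbing lemma converts $H[A \cup L]$ into a perfect $K_t^k$-matching; together with $M'$ this gives the desired perfect $K_t^k$-matching of $H$.

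The main obstacle is the absorbing lemma, specifically identifying the correct absorber so that the threshold $1 - 1/c$ emerges naturally from the codegree accounting. The factor $2$ in $2\binom{t-2}{k-2}$ indicates that single-vertex swaps alone do not suffice, and that two pair-swaps must be used to repartition $A_T \cup T$ into three $K_t^k$-copies without oversaturating the codegree constraints; pinning down the correct combinatorics of the swap structure and carrying out the attendant inclusion-exclusion over the $K_t^k$-edges involved will be the bulk of the work.
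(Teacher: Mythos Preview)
Your high-level architecture (absorbing set, almost-cover, combine) is exactly what the paper does. But both of your two ``ingredients'' are underspecified in ways that matter, and you have mislocated the source of the constant $c=\binom{t-1}{k-1}+2\binom{t-2}{k-2}$.

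\textbf{The absorber.} The paper does not use a $2t$-vertex absorber with ``single- and pair-swaps''; it uses a $t(t-1)$-vertex absorber built from a \emph{closeness} statement (Proposition~\ref{prp:close}): for any two vertices $u,v$ in the same class there are at least $(\gamma n)^{t-1}$ legal $(t-1)$-sets $S$ with both $S\cup\{u\}$ and $S\cup\{v\}$ spanning $K_t^k$. This is obtained by greedily choosing $w_2,\dots,w_{t}$, and the number of codegree constraints at the last step is the number of legal $(k-1)$-subsets of $\{u,v,w_2,\dots,w_{t-1}\}$, namely $\binom{t-2}{k-1}+2\binom{t-2}{k-2}$ --- note the leading term is $\binom{t-2}{k-1}$, not $\binom{t-1}{k-1}$. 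The absorber for $T=\{v_1,\dots,v_t\}$ is then $t-1$ such switch-sets chained together. Your proposed $2t$-vertex gadget is not described concretely enough to verify, and your counting that purports to recover $c$ from it is speculative; in particular the ``$2\binom{t-2}{k-2}$ from pair-swaps'' story does not correspond to any working construction you have written down. The constant $c$ in the theorem is in fact strictly larger than what the paper's absorbing lemma requires, so your expectation that the absorber is exactly calibrated to $c$ is wrong.

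\textbf{The almost-cover.} This is where the real work lies, and your two suggestions do not suffice. Greedy extraction only works while each class still has more than roughly $\binom{t-1}{k-1}(n-\widetilde{\delta}_{k-1}(H))\approx \frac{\binom{t-1}{k-1}}{c}\,n$ vertices (this is the number needed to survive the $\binom{t-1}{k-1}$ codegree constraints when placing the last vertex of a $K_t^k$); that leaves a positive fraction of $n$, not $\eta^2 n$. And there is no off-the-shelf ``near-perfect matching result for dense multipartite hypergraphs'' to invoke here: the paper has to prove one. It first shows (Theorem~\ref{thm:fractionalKq}, via Farkas' Lemma and a nontrivial averaging argument) that $\widetilde{\delta}_{k-1}\ge (1-\binom{t-1}{k-1}^{-1})n$ guarantees a perfect \emph{fractional} $K_t^k$-matching, and then (Lemma~\ref{lma:approximate}) converts this to an integral almost-cover by two rounds of random sampling followed by the Frankl--R\"odl nibble. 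This fractional-to-integral step is the main technical contribution of the paper and the place where $\binom{t-1}{k-1}$ enters; the absorbing lemma is the more routine half, contrary to your assessment.
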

We do not believe the upper bound is best possible.
For $k=3$ and $t=4$, it was shown, independently in~\cite{lo2011f} and~\cite{keevash2011geometric}, that for any $\gamma >0$ if $H$ is a 3-graph (not 3-partite) with $\delta_2(H) = (3/4+ \gamma) n$, then $H$ contains a perfect $K_4^3$-matching, provided $n$ is large enough.
(Moreover, in~\cite{keevash2011geometric}, Keevash and Mycroft have determined the exact value of $\delta_2(H)$-threshold for the existence of perfect $K_4^3$-matchings.)
Thus, it is natural to believe that $\phi^3(4,n)$ should be $3n/4 +o(n)$.

Our proofs of Theorem~\ref{thm:hajnalszemeredi} and Theorem~\ref{thm:partite} use the absorption technique introduced by R\"odl, Ruci\'{n}ski and Szemer\'{e}di~\cite{MR2500161}.
We now present an outline of the absorption technique.
First, we remove a set $U$ of disjoint copies of $K_t^k$ from $H$ satisfying the conditions of the absorption lemma, Lemma~\ref{lma:absorptionlemma}, and call the resulting graph $H'$.
Next, we find a $K_t^k$-matching covering almost all vertices of $H'$.
Let $W$ be the set of `leftover' vertices.
By the absorption property of $U$, there is a perfect $K^k_t$-matching in $H[U \cup W]$.
Hence, we obtain a perfect $K^k_t$-matching in $H$ as required.

In order to find a $K_t^k$-matching covering almost all vertices of $H'$, we follow the approach of Alon, Frankl, Huang, R\"{o}dl, Ruci\'{n}ski and Sudakov~\cite{alon2011large}, who consider fractional matchings. 
Let $\mathcal{K}_t^k(H)$ be the set of $K_t^k$ in a $k$-graph $H$.
A \emph{fractional $K_t^k$-matching} in a $k$-graph $H$ is a function $w: \mathcal{K}_t^k(H) \rightarrow [0,1]$ such that for each $v \in V$ we have 
\begin{align*}
\sum \{ w(T) : v \in T \in \mathcal{K}_t^k(H)  \}\le 1.
\end{align*}
Then $\sum_{T\in \mathcal{K}_t^k(H)} w(T)$ is the \emph{size} of $w$.
If the size is $|H|/t$, then $w$ is \emph{perfect}.
We are interested in perfect fractional $K_t^k$-matchings $w$ in a $t$-partite $k$-graph $H$ with each class of size~$n$.
Note that $|H| = tn$, so if $w$ is a perfect fractional $K_t^k$-matching in $H$, then
\begin{align*}
\sum \{ w(T) : v \in T \in \mathcal{K}_t^k(H)  \} = 1 \textrm{ for $v \in V$ and } \sum_{T\in \mathcal{K}_t^k(H)} w(T) = n.
\end{align*}
Define $\phi^{\ast,k}_l(t,n)$ to be the fractional analogue of $\phi^k_l(t,n)$.

\begin{thm} \label{thm:fractionalKq}
For $2 \le k \le t$ and $n \ge 1$,
\begin{align}
\lceil(t-k+1)n/t \rceil \le \phi^{\ast,k}(t,n) \le 
\begin{cases}
\lceil(t-1)n/t \rceil	& \textrm{for $k=2$,}\\
\left\lceil \left( 1-\binom{t-1}{k-1}^{-1}\right) n \right\rceil  +1 & \textrm{for $k\ge 3.$}
\end{cases} \nonumber
\end{align}
In particular, $\phi^{\ast,2}(t,n) = \lceil(t-1)n/t \rceil$.
\end{thm}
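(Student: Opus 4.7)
The proof splits into a lower-bound construction and an upper-bound linear-programming argument.

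\textbf{Lower bound.} We exhibit a $t$-partite $k$-graph $H$ with $\widetilde{\delta}_{k-1}(H)$ one below the claimed threshold but admitting no perfect fractional $K_t^k$-matching. Set $x = \lfloor (k-1)n/t \rfloor + 1$ and, for each $i \in [t]$, partition $V_i = X_i \cup Y_i$ with $|X_i| = x$ and $|Y_i| = n-x$. Let $H$ be the $t$-partite $k$-graph whose edges are all legal $k$-sets $T$ with $T \cap \bigcup_i Y_i \neq \emptyset$. A case analysis on whether a given legal $(k-1)$-set meets $\bigcup_i Y_i$ or lies inside $\bigcup_i X_i$ yields $\widetilde{\delta}_{k-1}(H) = n - x = \lceil(t-k+1)n/t\rceil - 1$. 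Moreover, every $K_t^k$ in $H$ uses at least $t-k+1$ vertices of $\bigcup_i Y_i$: otherwise the $k$-subset of its $t$ vertices avoiding $\bigcup_i Y_i$ would not be an edge. Hence any fractional $K_t^k$-matching $w$ satisfies $(t-k+1)\sum_K w(K) \le \sum_i |Y_i| = t(n-x) < (t-k+1)n$, so $\sum_K w(K) < n$ and no perfect fractional matching exists.

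\textbf{Upper bound.} By LP duality it suffices to show that every fractional $K_t^k$-cover $y : V \to \mathbb{R}_{\ge 0}$ with $\sum_{v\in K}y_v \ge 1$ for all $K \in \mathcal{K}_t^k(H)$ satisfies $\sum_v y_v \ge n$. Supposing for contradiction that $y$ (capped at $1$) has $\sum_v y_v < n$, the plan is to produce a $K_t^k$ of $y$-sum strictly less than $1$, violating the cover constraint. By averaging, some class $V_j$ has $\sum_{v\in V_j} y_v < n/t$, so in particular $V_j$ contains vertices of very small $y$-value. We build the desired $K_t^k$ greedily, starting from a low-$y$ vertex in $V_j$ and extending one class at a time; at each step we require every $k$-subset of the vertices chosen so far to be an edge of $H$, and among the valid candidates we select one of minimum $y$-value.

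For $k = 2$, the bound $\widetilde{\delta}(G) \ge \lceil(t-1)n/t\rceil$ ensures that after $i$ chosen vertices the common neighborhood in any remaining class has size at least $n - i\cdot n/t$, which stays linear in $n$ throughout and leaves ample room to pick a low-$y$ vertex; summing the chosen values produces a $K_t$ with $\sum_{v\in K} y_v < 1$. For $k \ge 3$, a union bound over the $\binom{i}{k-1}$ codegree constraints at step $i+1$ excludes at most $\binom{i}{k-1}(n/\binom{t-1}{k-1} - 1)$ vertices, leaving at least $\binom{t-1}{k-1}$ valid candidates even at the tight final step $i = t-1$. An averaging argument that distributes the $y$-budget $\sum_v y_v < n$ across the $t$ extension steps and exploits the choice of starting vertex then yields a $K_t^k$ whose total $y$-weight is strictly less than $1$.

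The main obstacle is controlling the $y$-value at the final step of the $k \ge 3$ greedy: only $\binom{t-1}{k-1}$ candidates survive there, a constant independent of $n$, so one cannot trivially pick a vertex with $y$-value close to the class average. The threshold $(1 - \binom{t-1}{k-1}^{-1})n + 1$ is calibrated precisely so that this final step has a positive number of valid candidates, and the delicate balance between codegree exclusions across all steps is what enables the accumulated $y$-sum to be kept below $1$.
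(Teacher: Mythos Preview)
Your lower-bound construction and the LP-duality framing of the upper bound are both correct and match the paper's approach (the paper certifies non-existence via a Farkas weighting rather than your direct counting, but the construction is identical). The gap is in the body of the upper-bound argument: you never actually show that your greedy produces a $K_t^k$ with $y$-sum below~$1$. At step $j$ there are at least $r(j):=n-\binom{j-1}{k-1}(n-\widetilde\delta)$ valid candidates, so the minimum $y$-value among them is at most the $(n-r(j)+1)$-th smallest value in that class; but for a fixed class ordering this single inequality does not force the total below~$1$, and for $k\ge 3$ you yourself note that the final step has only $\binom{t-1}{k-1}$ candidates, so no per-step averaging is available there. Your closing sentence names this obstruction but offers no resolution, and the same vagueness already leaves your $k=2$ sketch incomplete.

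The paper's missing idea is to use \emph{all} class orders simultaneously rather than a single greedy path. After sorting each class by decreasing cover weight $w$, the greedy succeeds in any order, so for every permutation $\sigma$ of $[t]$ the cover constraint yields $\sum_i w(v_{i,r(\sigma(i))})\ge 1$. Summing over the $t$ cyclic shifts gives $\sum_{i\in[t]}\sum_{j\in[t]} w(v_{i,r(j)})\ge t$, which symmetrises the step sizes $r(1),\ldots,r(t)$ across classes and is precisely what neutralises the constant-size final step. A multiset rearrangement (Claim~2.3 in the paper) together with monotonicity of $w$ then converts this into $\sum_v w(v)\ge t\widetilde\delta/(t-k+1)\ge n$, the desired contradiction. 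Without this permutation-averaging step the argument does not go through.
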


Notice that Theorem~\ref{thm:fractionalKq} is only tight for $k=2$.
The upper bound on $\phi^{\ast,k}(t,n)$ given in Theorem~\ref{thm:fractionalKq} is sufficient for our purpose, that is, to prove Theorem~\ref{thm:hajnalszemeredi} and Theorem~\ref{thm:partite}.
In addition, we also obtain the following result.

\begin{thm} \label{thm:main2}
Let $2 \le k \le t$ be integers.
Then, given any $\varepsilon, \gamma >0$, there exists an integer $n_0$ such that every $k$-graph $H$ of order $n > n_0$ with 
\begin{align}
{\delta}_{k-1}(H) \ge t \phi^{\ast,k}(t,\lceil n/t \rceil) + \gamma n \nonumber
\end{align}
contains a $K_t^k$-matching $\mathcal{T}$ covering all but at most $\varepsilon n$ vertices. 
\end{thm}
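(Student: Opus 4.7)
The plan is to reduce from the non-partite setting to the partite one by a uniform random partition, apply Theorem~\ref{thm:fractionalKq} to produce a perfect fractional $K_t^k$-matching in the resulting $t$-partite sub-$k$-graph, and then round that fractional matching to an almost-perfect integer $K_t^k$-matching by a semi-random nibble argument in the spirit of Alon, Frankl, Huang, R\"odl, Ruci\'nski and Sudakov~\cite{alon2011large}.

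\emph{Random partition.} Partition $V(H)$ uniformly at random into classes $V_1,\dots,V_t$ whose sizes differ by at most one, so that $|V_i|\in\{\lfloor n/t\rfloor,\lceil n/t\rceil\}$. Let $H'$ be the spanning $t$-partite sub-$k$-graph of $H$ consisting of the legal edges. For each legal $(k-1)$-set $T$ of type $I\in\binom{[t]}{k-1}$ and each $j\in[t]\setminus I$, the variable $\deg^{H'}_{\{j\}}(T)$ counts the vertices of $N^H(T)$ placed in $V_j$, and conditional on the placement of $T$ each such vertex lies in $V_j$ independently with probability $(1\pm o(1))/t$. A Chernoff bound together with a union bound over the $O(n^{k-1})$ pairs $(T,j)$ gives, with probability bounded away from $0$,
\begin{equation*}
\widetilde{\delta}_{k-1}(H')\ \ge\ \frac{\delta_{k-1}(H)}{t}-o(n)\ \ge\ \phi^{\ast,k}(t,\lceil n/t\rceil)+\frac{\gamma n}{2t}.
\end{equation*}
Fix such a partition; if the classes are not of equal size, discard at most one vertex from each larger class to balance them, losing at most $t$ vertices and at most $t$ in the minimum codegree. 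By the definition of $\phi^{\ast,k}$, the resulting balanced $H'$ admits a perfect fractional $K_t^k$-matching $w\colon\mathcal{K}_t^k(H')\to[0,1]$.

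\emph{Rounding.} Let $\mathcal H$ be the auxiliary $t$-uniform hypergraph on $V(H')$ whose edges are the copies of $K_t^k$ in $H'$; the function $w$ is then a perfect fractional matching of $\mathcal H$. The degrees of $\mathcal H$ are $O(n^{t-1})$ while the pairwise codegrees are only $O(n^{t-2})$, so the codegrees are $o(D)$ for the typical degree $D$. A semi-random process that, at each step, selects each copy $T\in\mathcal{K}_t^k(H')$ with probability proportional to $w(T)$, removes the covered vertices and re-weights on the surviving vertices — exactly the rounding used in~\cite{alon2011large} — produces an integer $K_t^k$-matching $\mathcal T$ of $H'$ leaving at most $\varepsilon n/2$ vertices uncovered. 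Together with the $O(t)$ vertices discarded when balancing, this gives a $K_t^k$-matching in $H$ missing at most $\varepsilon n$ vertices.

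\emph{Main obstacle.} The delicate step is the rounding: one-shot independent sampling from $w$ would leave a linear number of uncovered vertices, so the iterative nibble (or an appeal to a weighted Frankl--R\"odl- / Pippenger--Spencer-type theorem) is essential in order to push the loss down to $o(n)$. Its hypothesis — small codegrees of $\mathcal H$ relative to the degrees — is immediate from $|V_i|\ge \lfloor n/t\rfloor$, and verifying that the nibble, driven by the non-regular weighting $w$, really does achieve the required $\varepsilon n/2$ leftover is where the quantitative work of the proof lies.
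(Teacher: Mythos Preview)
Your global plan coincides with the paper's: a random equipartition reduces to the balanced $t$-partite setting (this is Lemma~\ref{lma:makepartite}), after which one invokes the partite almost-covering statement Lemma~\ref{lma:approximate}, whose proof is indeed modelled on~\cite{alon2011large}. So the architecture is right; the gap is in how you carry out the rounding.

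You take a single perfect fractional $K_t^k$-matching $w$ on all of $H'$ and propose to run a weighted nibble driven by $w$, calling this ``exactly the rounding used in~\cite{alon2011large}''. It is not. What~\cite{alon2011large} and the paper (in the proof of Lemma~\ref{lma:approximate}) actually do is draw $n^{1.1}$ independent random balanced subsets $R'(1),\dots,R'(n^{1.1})$ of $V(H')$, each with classes of size $\approx n^{0.1}$; the codegree hypothesis is inherited by each $H[R'(i)]$, so each carries its \emph{own} perfect fractional matching $w^i$; one then includes each clique $T$ in an auxiliary $t$-graph $H^{\ast}$ independently with probability $w^{i_T}(T)$. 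The many random pieces are there precisely to regularise: every vertex lies in $(1\pm o(1))n^{0.2}$ of the $R'(i)$ and receives expected degree $1$ from each, so $H^{\ast}$ has all degrees $(1\pm o(1))n^{0.2}$ with $\Delta_2(H^{\ast})=o(n^{0.2})$, and now Theorem~\ref{thm:FranklRodl} applies. Your single-$w$ route does not reach this point. The unweighted clique hypergraph $\mathcal H$ need not be almost regular (two vertices satisfying the same codegree bound can lie in numbers of $K_t^k$'s differing by a constant factor), so Theorem~\ref{thm:FranklRodl} does not apply to $\mathcal H$ directly; a weighted Pippenger--Spencer theorem would need $\sum_{T\ni u,v}w(T)=o(1)$, which nothing in your setup forces (a basic LP optimum has at most $tn$ nonzero coordinates, so typical positive weights are of constant order); and ``re-weighting on the surviving vertices'' after a nibble step presumes the leftover partite graph stays balanced and above the $\phi^{\ast,k}$ threshold, which a random deletion does not guarantee. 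In short, the random-partition reduction is fine, but the rounding must go through the many-subsets regularisation, not a single global $w$.
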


Together with Theorem~\ref{thm:fractionalKq}, we obtain the following corollary for general $k$-graphs.

\begin{cor} \label{cor:main2}
Let $3 \le k \le t$ be integers.
Then, given any $\varepsilon, \gamma >0$, there exists an integer $n_0$ such that every $k$-graph $H$ of order $n > n_0$ with
\begin{align}
{\delta}_{k-1}(H) \ge \left( 1-\binom{t-1}{k-1}^{-1} + \gamma \right) n \nonumber
\end{align}
contains a $K_t^k$-matching $\mathcal{T}$ covering all but at most $\varepsilon n$ vertices. 
\end{cor}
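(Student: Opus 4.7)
The plan is to derive Corollary~\ref{cor:main2} as a direct consequence of Theorem~\ref{thm:main2} and Theorem~\ref{thm:fractionalKq}; no new argument is needed beyond bookkeeping with the fractional threshold.

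Given $\varepsilon,\gamma>0$ and $3\le k\le t$, I would set $\gamma'=\gamma/2$ and apply Theorem~\ref{thm:main2} with parameters $(\varepsilon,\gamma')$ to obtain some threshold $n_0^{\ast}=n_0^{\ast}(k,t,\varepsilon,\gamma')$. The task then reduces to checking that, for $n$ sufficiently large, the codegree hypothesis of the corollary implies the codegree hypothesis of Theorem~\ref{thm:main2}, namely
\begin{align*}
\left(1-\binom{t-1}{k-1}^{-1}+\gamma\right)n \;\ge\; t\,\phi^{\ast,k}\!\left(t,\lceil n/t\rceil\right)+\gamma' n.
\end{align*}

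By the $k\ge 3$ case of Theorem~\ref{thm:fractionalKq},
\begin{align*}
\phi^{\ast,k}(t,\lceil n/t\rceil) \;\le\; \left\lceil\left(1-\binom{t-1}{k-1}^{-1}\right)\lceil n/t\rceil\right\rceil+1 \;\le\; \left(1-\binom{t-1}{k-1}^{-1}\right)\frac{n}{t}+C,
\end{align*}
where $C=C(k,t)$ absorbs the two ceilings and the additive $+1$. Multiplying by $t$ yields $t\,\phi^{\ast,k}(t,\lceil n/t\rceil)\le (1-\binom{t-1}{k-1}^{-1})n+tC$. Choosing $n$ large enough that $tC\le (\gamma-\gamma')n=\gamma n/2$, the desired inequality holds.

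Thus setting $n_0=\max\{n_0^{\ast},\,2tC/\gamma\}$ suffices: any $k$-graph $H$ of order $n>n_0$ satisfying the hypothesis of the corollary also satisfies the hypothesis of Theorem~\ref{thm:main2} with parameters $(\varepsilon,\gamma/2)$, so $H$ contains a $K_t^k$-matching covering all but at most $\varepsilon n$ vertices. There is essentially no obstacle here; the only delicate point is carrying the ceiling and the additive $+1$ from Theorem~\ref{thm:fractionalKq} through the multiplication by $t$, which is why the slack $\gamma/2$ in the minimum codegree hypothesis is used up.
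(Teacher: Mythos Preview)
Your proposal is correct and matches the paper's approach exactly: the paper simply states that Corollary~\ref{cor:main2} follows from Theorem~\ref{thm:main2} ``together with Theorem~\ref{thm:fractionalKq}'' and provides no further argument. Your bookkeeping with the $\gamma/2$ slack to absorb the ceilings and the additive~$+1$ is precisely the routine verification the paper leaves implicit.
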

Observe that Corollary~\ref{cor:main2} is a stronger statement than Lemma~6.1 in~\cite{lo2011f}.
Thus, by replacing Lemma~6.1 in~\cite{lo2011f} with Theorem~\ref{thm:main2}, we improve the bounds of Theorem~1.4 in~\cite{lo2011f}.

In the next section, we prove Theorem~\ref{thm:fractionalKq}.
Theorem~\ref{thm:hajnalszemeredi} and Theorem~\ref{thm:partite} are proved simultaneously in Section~\ref{sec:proof}.
Finally, Theorem~\ref{thm:main2} is proved in Section~\ref{sec:proofofthm:main}.

\section{Perfect fractional $K_t^k$-matchings} \label{sec:fractional}

In this section we are going to prove Theorem~\ref{thm:fractionalKq}.
We require Farkas Lemma.

\begin{lma}[Farkas Lemma (see \cite{MR859549} P.257)] \label{lma:Farkas}
A system of equations $y A = b $, $y \ge 0$ is solvable if and only if the system $A x \ge 0 $, $bx <0$ is unsolvable.
\end{lma}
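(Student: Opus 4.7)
The plan is to prove the two directions of the equivalence separately: one direction is a one-line calculation, while the other reduces to separating a point from a closed convex cone.

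The easy direction is direct. Suppose $y A = b$ has a solution with $y \ge 0$. Then for any $x$ with $Ax \ge 0$ we have
\[
b x = (y A) x = y (A x) \ge 0,
\]
since $y$ and $Ax$ are coordinatewise nonnegative. Hence no $x$ satisfies simultaneously $Ax \ge 0$ and $bx < 0$, so the second system is unsolvable.

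For the converse I would prove the contrapositive: assuming $yA = b$, $y \ge 0$ has no solution, I would construct an $x$ with $Ax \ge 0$ and $bx < 0$. Writing $A$ as an $m \times n$ matrix, set
\[
C = \{ y A : y \in \mathbb{R}^m,\ y \ge 0 \} \subseteq \mathbb{R}^n,
\]
the cone generated by the rows of $A$. By assumption $b \notin C$. The plan is then to invoke the separating hyperplane theorem for the point $b$ and the closed convex set $C$: this produces $x \in \mathbb{R}^n$ and $\alpha \in \mathbb{R}$ with $b x < \alpha \le c x$ for every $c \in C$. Since $0 \in C$, we get $\alpha \le 0$, hence $bx < 0$. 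Moreover, if some row $A_i$ of $A$ satisfied $A_i x < 0$, then taking $y = \lambda e_i$ and letting $\lambda \to \infty$ would force $(yA) x \to -\infty$, contradicting $\alpha \le (y A) x$. Therefore $A x \ge 0$, as required.

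The main obstacle is justifying that $C$ is closed, which is necessary for the separation step to give a strict inequality on the $b$ side. Convexity of $C$ is immediate from the definition, but closedness is not automatic because $C$ is the image of the closed orthant under a linear map, and such images need not be closed in general. To handle this I would use a Carath\'{e}odory-style reduction: every point of $C$ can in fact be represented as a nonnegative combination of a \emph{linearly independent} subset of the rows of $A$, so $C$ is a finite union of cones of the form $\{\sum_{i \in I} y_i A_i : y_i \ge 0\}$ where $\{A_i\}_{i \in I}$ is linearly independent. Each such cone is the image of $\mathbb{R}_{\ge 0}^{|I|}$ under an injective linear map, hence a homeomorphism onto its image, and is therefore closed; a finite union of closed sets is closed. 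A brief compactness argument (normalising any convergent sequence in $C$ to control the coefficients $y_i$) completes this step and is the only place where a non-trivial analytic fact beyond linear algebra enters the proof. As an alternative, one may avoid the separation theorem entirely by using Fourier--Motzkin elimination to eliminate the variables of $x$ in the system $\{Ax \ge 0,\ bx < 0\}$ one at a time, obtaining at the end an explicit nonnegative combination witnessing $yA = b$; this gives a purely combinatorial proof but with heavier bookkeeping.
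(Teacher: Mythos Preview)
The paper does not prove Farkas' Lemma; it is quoted as a known result with a reference to Lov\'asz and Plummer's \emph{Matching Theory} and then applied in Section~\ref{sec:fractional}. Your proposed argument is correct and follows the standard route: the forward direction is the immediate computation you gave, and for the converse you separate the point $b$ from the finitely generated cone $C=\{yA:y\ge 0\}$, with closedness of $C$ handled by the Carath\'eodory reduction to nonnegative combinations of linearly independent rows (each such sub-cone being the image of a closed orthant under an injective linear map, hence closed). Since the paper offers no proof of its own here, there is nothing to compare against; your write-up would serve as a self-contained justification if one were desired.
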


First we prove the lower bounds on $\phi^{\ast,k}(t,n)$.

\begin{prp} \label{prp:fractionalKqlower}
Let $2 \le k \le t$ and $n \ge 1$ be integers.
There exists a $t$-partite $k$-graph $H$ with each class of size~$n$ with $\widetilde{\delta}_{k-1}(H) = \lceil (t-k+1)n/t \rceil -1$ without a perfect fractional $K_t^k$-matching.
\end{prp}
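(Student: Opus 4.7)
The plan is to give an explicit construction of such an extremal $H$, built around a small ``barrier set'' $B$ whose fractional weight forces a contradiction.

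First I would fix, for each $i \in [t]$, a subset $B_i \subset V_i$ with $|B_i| = b := \lceil (t-k+1)n/t \rceil - 1$, and set $B = \bigcup_{i \in [t]} B_i$. Define $H$ to be the $t$-partite $k$-graph on $V_1 \cup \dots \cup V_t$ whose edges are precisely the legal $k$-sets that meet $B$, i.e.
\begin{align*}
E(H) = \{ e \in V_{[t]} : |e| = k, \ e \text{ legal}, \ e \cap B \neq \emptyset \}.
\end{align*}
This is clearly $t$-partite with classes of size $n$.

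Next I would verify the codegree condition. Given any $(k-1)$-legal set $T$ with $T \in V_I$ for some $I \in \binom{[t]}{k-1}$, and any $j \in [t]\setminus I$, I count $v \in V_j$ with $T \cup \{v\} \in E(H)$. If $T \cap B \neq \emptyset$, then every $v \in V_j$ works, giving degree $n$. If $T \cap B = \emptyset$, then $T \cup \{v\} \in E(H)$ iff $v \in B_j$, giving degree exactly $b$. Hence $\widetilde{\delta}_{k-1}(H) = b = \lceil (t-k+1)n/t \rceil - 1$, and moreover this minimum is attained (taking $T$ disjoint from $B$).

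The heart of the argument is showing $H$ admits no perfect fractional $K_t^k$-matching. The key structural observation is that every copy of $K_t^k$ in $H$ uses at least $t-k+1$ vertices of $B$: if $T = \{v_1,\dots,v_t\}$ spans a $K_t^k$, then every $k$-subset of $T$ is an edge of $H$ and hence meets $B$, so $T \setminus B$ can contain no $k$-subset, forcing $|T \cap B| \geq t-k+1$. Now suppose for contradiction that $w$ is a perfect fractional $K_t^k$-matching. Summing the equality $\sum_{T \ni v} w(T) = 1$ over $v \in B$ and swapping the order of summation gives
\begin{align*}
tb \;=\; |B| \;=\; \sum_{T \in \mathcal{K}_t^k(H)} w(T)\, |T \cap B| \;\geq\; (t-k+1) \sum_{T \in \mathcal{K}_t^k(H)} w(T) \;=\; (t-k+1)n,
\end{align*}
which forces $b \geq (t-k+1)n/t$, contradicting $b = \lceil (t-k+1)n/t \rceil - 1 < (t-k+1)n/t$. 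This completes the proof sketch.

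The only mildly subtle point is the structural lemma about $K_t^k$'s meeting $B$ in many vertices; everything else is a one-line double-counting. No appeal to Farkas is needed for this direction (it is presumably reserved for the matching upper bound on $\phi^{\ast,k}(t,n)$).
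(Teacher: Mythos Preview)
Your construction and codegree verification are identical to the paper's (the paper calls your $B_i$ by the name $W_i$), and the key structural observation---that every $K_t^k$ in $H$ meets $B$ in at least $t-k+1$ vertices---is likewise the same. The one genuine difference is the final step: the paper invokes Farkas' Lemma and exhibits a dual certificate, namely the vertex weighting $w(v) = (k-1)/(t-k+1)$ on $B$ and $w(v) = -1$ off $B$, checking that $\sum_{v\in T} w(v) \ge 0$ for each $T \in \mathcal{K}_t^k(H)$ while $\sum_{v\in V} w(v) < 0$. Your double-counting argument is the primal side of exactly the same LP duality: summing the perfect-matching constraints over $B$ and using $|T\cap B|\ge t-k+1$ gives $tb \ge (t-k+1)n$, contradicting $b = \lceil (t-k+1)n/t\rceil - 1 < (t-k+1)n/t$. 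Your route is slightly more elementary since it avoids citing Farkas; the paper's route has the advantage of setting up the Farkas framework that is genuinely needed for the upper bound in Theorem~\ref{thm:fractionalKq}. Both are correct and morally equivalent.
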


\begin{proof}
We fix $t$, $k$ and $n$.
Let $V_1$, \dots, $V_t$ be disjoint vertex sets each of size $n$.
For $i \in [t]$, fix a $(\lceil (t-k+1)n/t \rceil -1)$-set $W_i \subset V_i$.
Define $H$ to be the $t$-partite $k$-graph on vertex classes $V_1$, \dots, $V_t$ such that every edge in $H$ meets $W_i$ for some~$i$.
Clearly, $\widetilde{\delta}_{k-1}(H) = \lceil (t-k+1)n/t \rceil -1$.
Thus, it suffices to show that $H$ does not contain a perfect fractional $K_t^k$-matching.
Let $A$ be the matrix of $H$ with rows representing the $K_t^k(H)$ and columns representing the vertices of $H$ such that $A_{T,v} =1$ if and only if $v \in T$
for $T \in \mathcal{K}_t^k(H)$ and $v \in V$.
By Farkas Lemma, Lemma~\ref{lma:Farkas}, taking $y = (w(T): T \in \mathcal{K}_t^k(H))$ and $b=(1,\dots,1)$, there is no perfect fractional $K_t^k$-matching in $H$ if and only if there is a weighting function $w: V \rightarrow \mathbb{R}$ such that 
\begin{align}
	\forall T \in \mathcal{K}_t^k(H) \  \sum_{v \in T} w(v) \ge 0 \textrm{ and } \sum_{v \in V} w(v) <0. \label{eqn:Farkas}
\end{align}
Set $w(v) = (k-1)/(t-k+1)$ if $ v \in  \bigcup_{i \in [t]} W_i$ and $w(v) = -1$ otherwise.
Clearly,
\begin{align*}
\sum w(v) = \frac{k-1}{t-k+1} t \left( \left\lceil \frac{(t-k+1)n}t \right\rceil -1 \right) - t\left( n- \left\lceil \frac{(t-k+1)n}t \right\rceil +1 \right) < 0.
\end{align*}
For $T \in \mathcal{K}_t^k(H)$, $T$ contains at least $t-k+1$ vertices in 
$ \bigcup_{i \in [t]} W_i$ and so $\sum_{v \in T} w(v) \ge 0$.
Thus, $w$ satisfies~\eqref{eqn:Farkas}, so $H$ does not contain a perfect fractional $K_t^k$-matching.
\end{proof}

\begin{proof}[Proof of Theorem~\ref{thm:fractionalKq}]
By Proposition~\ref{prp:fractionalKqlower}, it is sufficient to prove the upper bound on $\phi^{\ast,k}(t,n)$.
Fix $k$, $t$ and $n$.
Suppose the contrary that there exists a $t$-partite $k$-graph $H$ with each class of size~$n$ and 
\begin{align*}
\widetilde{\delta}_{k-1}(H) \ge \widetilde{\delta}
\end{align*}
that does not contain a perfect fractional $K_t^k$-matching, where $\widetilde{\delta}$ is the upper bound on $\phi^{\ast,k}(t,n)$ stated in the theorem.
By a similar argument as in the proof of Proposition~\ref{prp:fractionalKqlower}, there is a weighting function $w: V \rightarrow \mathbb{R}$ satisfying~\eqref{eqn:Farkas}.
Let $V_1$, \dots, $V_t$ be the vertex classes of $H$ with $V_i = \{v_{i,1}, \dots, v_{i,n}\}$ for $i \in [t]$.
We identify the $t$-tuple $(j_1, \dots, j_t) \in [n]^t$ with the $[t]$-legal set $\{v_{1,j_1}, \dots, v_{t,j_t} \}$ and write $w(j_1, \dots, j_t)$ to mean $\sum_{i \in [t]} w(v_{i,j_i})$.
Without loss of generality we may assume that for $i \in [t]$, $(w(v_{i,j}))_{j \in [n]}$ is a decreasing sequence, i.e. $w(v_{i,j}) \ge w(v_{i,j'})$ for $1 \le j < j' \le n$.
By considering the vertex weighting $w'$ such that 
\begin{align*}
	w'(v) = 
\begin{cases}
w(v) + \varepsilon & \textrm{if $v \in V_{i}$,}\\
w(v) - \varepsilon & \textrm{if $v \in V_{i'}$,}\\
w(v) & \textrm{otherwise,}
\end{cases}
\end{align*}
with $\varepsilon >0$, we may assume that $w(v_{i,n}) = w(v_{i',n})$ for all $i,i' \in [t]$.
By~\eqref{eqn:Farkas}, $w(v_{i,n})$ is negative as $w(v_{i,j}) \ge w(v_{i,n}) = w(v_{i',n})$ for all $j \in [n]$ and $i,i' \in [t]$.
Thus, by multiplying through by a suitable constant we may assume that  $w(v_{i,n}) =-1$ for all $i \in [t]$.
We further assume that $w(v) \le t-1$ for all $v \in V$, because \eqref{eqn:Farkas} still holds after we replace $w(v)$ with $\min \{ w(v), t-1 \}$.
Finally, we apply the linear transformation $(w(v)+1)/t$ for $v \in V$, which scales $w$ so that it now lies in the interval~$[0,1]$ and $w$ satisfies the following inequalities
\begin{align}
	\forall T \in \mathcal{K}_t^k(H) \  \sum_{v \in T} w(v) \ge 1 \textrm{ and } \sum_{v \in V} w(v) <n. \label{eqn:Farkas1}
\end{align}

For $j \in [t]$, set $r(j) = n - \binom{j-1}{k-1} (n- \widetilde{\delta})$.
Given a $J$-legal set $T \in \mathcal{K}_j^k(H)$ with $J \in \binom{[t]}{j}$ and $j < k$, for each $i \in [t] \backslash J$ there are at least $r(j+1)$ vertices $v \in V_i$ such that $T \cup v$ forms a $K^k_{j+1}$.
Note that $r(j) = n$ for $j \in [k-1]$ and $r(k) = \widetilde{\delta}$.
By the definition of $\widetilde{\delta}$, we know that $r(t) \ge 1$. 
Hence, we can find a $K_t^k$ $(j_1,j_2,\dots, j_t)$ with $j_i \ge r(i)$ for $i \in [t]$.

Recall that for $i \in [t]$ and $1 \le j < j' \le n$, $w(v_{i,j}) \ge w(v_{i,j'})$.
Therefore,
\begin{align}
\sum_{i \in [t]} w(v_{i,r(i)}) = w(r(1),r(2), \dots, r(t) ) \ge w(j_1,j_2,\dots, j_t) \ge 1 \nonumber
\end{align}
by~\eqref{eqn:Farkas1}.
By a similar argument, for any permutation $\sigma$ of~$[t]$ we have
\begin{align}
\sum_{i \in [t]} w(v_{i,r(\sigma(i))}) \ge 1. \nonumber
\end{align}
Setting $\sigma = (1,2,\dots,t)$, we have
\begin{align}
\sum_{i \in [t]} \sum_{j \in [t]} w(v_{i,r(j)}) = \sum_{j \in [t]} \sum_{i \in [t]}  w(v_{i,r(\sigma^j(i))})\ge t. \label{eqn:sumw1}
\end{align}
Observe that $w(v_{i,r(j)}) \le w(v_{i,r(j+1)})$ for $i \in [t]$ and $j \in [t-1]$.
Since $r(j) = n$ for $j \in [k-1]$ and $w(v_{i,n}) = 0$ for $i \in [t]$,
\begin{align}
\sum_{i \in [t]} w(v_{i,r(t)}) = & \frac{1}{t-k+1}  \sum_{i \in [t]}\left(  \sum_{j \in[k-1]} w(v_{i,r(j)}) + (t-k+1) w(v_{i,r(t)})  \right) \nonumber \\
\ge & \frac{1}{t-k+1}  \sum_{i \in [t]}\sum_{j \in [t]} w(v_{i,r(j)}) \ge  \frac{t}{t-k+1}, \label{eqn:sumw2}
\end{align}
where the last inequality is due to~\eqref{eqn:sumw1}.

\begin{clm} \label{clm:k}
\begin{align}
\sum_{i \in [t]}  \left( \sum_{ j \in [t-1]} (r(j)-r(j+1)) w(v_{i,r(j)}) + \frac{r(k) - r(t)}{t-k}  w(v_{i,r(t)}) \right) \ge \frac{t(r(k)-r(t))}{t-k} . \nonumber
\end{align}
\end{clm}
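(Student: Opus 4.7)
The plan is to set $S_j := \sum_{i\in[t]} w(v_{i,r(j)})$ and $\alpha_j := r(j)-r(j+1)$, write the left-hand side of the claim as $\sum_{j=k}^{t-1}\alpha_j S_j + \beta S_t$ with $\beta := (r(k)-r(t))/(t-k)$, and then reduce to \eqref{eqn:sumw1} by a short monotonicity argument. First I would strip off the vanishing terms: since $r(j)=n$ for $1 \le j \le k-1$, one has $\alpha_j=0$ for $j\le k-2$, while for $j=k-1$ the factor $w(v_{i,r(k-1)})=w(v_{i,n})=0$ kills the term, so the outer sum in the claim collapses onto $j\in[k,t-1]$. The identical cancellation applied to \eqref{eqn:sumw1} yields the convenient form $\sum_{j=k}^{t} S_j\ge t$.

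The key observations are then threefold. Pascal's identity gives $\alpha_j = \binom{j-1}{k-2}(n-\widetilde{\delta})$, which is non-decreasing in $j$; telescoping gives $\sum_{j=k}^{t-1}\alpha_j = r(k)-r(t) = (t-k)\beta$, so the shifted coefficients $(\alpha_j-\beta)_{j=k}^{t-1}$ are non-decreasing with zero sum. Moreover $(S_j)_{j\ge k}$ is non-decreasing, since $r(j)$ is non-increasing in $j$ and each sequence $(w(v_{i,m}))_{m\in[n]}$ is non-increasing. With this in hand, I would decompose
\[
\sum_{j=k}^{t-1}\alpha_j S_j + \beta S_t \;=\; \beta \sum_{j=k}^{t} S_j \;+\; \sum_{j=k}^{t-1}(\alpha_j - \beta) S_j,
\]
so that the first summand is already at least $t\beta$ by \eqref{eqn:sumw1}.

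For the remaining ``correction'' $\sum_{j=k}^{t-1}(\alpha_j-\beta)S_j$, let $j^{\ast}$ be the largest index in $[k,t-1]$ with $\alpha_{j^\ast}\le \beta$; then $(\alpha_j-\beta)(S_j-S_{j^\ast})\ge 0$ for every $j\in[k,t-1]$, so summing and using the zero-sum normalisation gives
\[
\sum_{j=k}^{t-1}(\alpha_j-\beta)S_j \;\ge\; S_{j^\ast}\sum_{j=k}^{t-1}(\alpha_j-\beta) \;=\; 0,
\]
which is essentially Chebyshev's sum inequality. I expect no real obstacle: the one thing to see is that, after the $w(v_{i,n})=0$ cancellation, the weights $\alpha_j$ and the partial sums $S_j$ are both non-decreasing in $j$ with matching zero-sum normalisation, so a single rearrangement estimate closes the gap between \eqref{eqn:sumw1} and the claim.
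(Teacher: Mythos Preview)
Your argument is correct. The paper proves the claim by an equivalent but more combinatorial route: it multiplies through by $t-k$, interprets the coefficients $(t-k)\alpha_j$ (for $k\le j\le t-1$) and $r(k)-r(t)$ (for $j=t$) as multiplicities in a multiset $A$ on $\{k,\dots,t\}$, and then uses the monotonicity of the $\alpha_j$ to argue that one can pass from $A$ to the uniform multiset $A'$ containing each of $k,\dots,t$ exactly $r(k)-r(t)$ times by only replacing elements with smaller ones; since $w(v_{i,r(j)})$ is non-decreasing in $j$, this only decreases $\sum_{i}\sum_{j\in A} w(v_{i,r(j)})$, and \eqref{eqn:sumw1} applied to $A'$ gives the bound. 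Your decomposition into $\beta\sum_{j=k}^{t} S_j$ plus a zero-sum correction $\sum_{j=k}^{t-1}(\alpha_j-\beta)S_j$, together with the Chebyshev-type estimate for the latter, is precisely the analytic counterpart of this majorisation step (the condition ``$A$ can be reduced to $A'$'' is exactly $\sum_{l\ge j}\alpha_l \ge (t-j)\beta$ for all $j$, which is what your monotonicity-plus-zero-mean observation encodes). Both arguments rest on the same three facts you identified; your formulation is somewhat more streamlined, while the paper's multiset picture makes the link to \eqref{eqn:sumw1} visually immediate.
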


\begin{proof}[Proof of claim]
Consider the multiset $A$ containing $(t-k)(r(j)-r(j+1))$ copies of $j$ 
for $ k \le j \le t-1$ and $r(k) - r(t)$ copies of $t$.
In order to prove the claim, (by multiplying though by $(t-k)$), it is enough to show that 
\begin{align*}
	\sum_{i \in [t]} \sum_{j \in A} w(v_{i,r(j)}) \ge t(r(k)-r(t)).
\end{align*}
First note that 
\begin{align*}
 \sum_{ k \le j \le t-1} (r(j)-r(j+1)) = r(k) - r(t),
\end{align*}
so the number of elements $j$ (with multiplicity) in $A$ with $ k \le j \le t-1$ is exactly $(t-k) ( r(k) - r(t))$.
Note that $r(j)-r(j+1) = \binom{j-1}{k-2}(n-\widetilde{\delta})$.
Hence, for $k \le j < j' \le t-1$, there are more copies of $j'$ than copies of $j$ in $A$.
Recall that $A$ contains precisely $r(k) - r(t)$ copies of~$t$.
It follows that we can replace some elements by smaller elements to obtain a multiset $A'$ containing each of $k, \dots, t$ exactly $r(k)-r(t)$ times.
Since $w(v_{i,r(j)})$ is increasing in $j$ and $w(v_{i,r(j)}) = 0$ for $j \in [k-1]$, it follows that 
\begin{align*}
	\sum_{i \in [t]} \sum_{j \in A} w(v_{i,r(j)}) & \ge \sum_{i \in [t]} \sum_{j \in A'} w(v_{i,r(j)}) 
= (r(t) - r(k)) \sum_{i \in [t]} \sum_{k \le j \le t} w_{v_{i,r(j)}} \\
& = (r(t) - r(k)) \sum_{i \in [t]} \sum_{j \in [t]} w(v_{i,r(j)})
\ge t(r(t) - r(k))
\end{align*}
as required, where the last inequality is due to~\eqref{eqn:sumw1}.
\end{proof}

Recall that $r(k) = \widetilde{\delta}$ and $r(1) = n$.
Since $w(v_{i,j'})$ is decreasing in $j'$, $w(v_{i,j'}) \ge w(v_{i,r(j)})$ for $ r(j+1)<  j'  \le r(j)$ and $j \in [t]$, where we take $r(t+1) = 0$.
Hence, 
\begin{align}
\sum_{i \in [t]} \sum_{j \in [n]} w(v_{i,j}) 
\ge 
\sum_{i \in [t]}  \left( \sum_{j \in [t-1]} (r(j)- r(j+1)) w(v_{i,r(j)}) + r(t) w(v_{i,r(t)}) \right)  \nonumber.
\end{align}
By Claim~\ref{clm:k} and \eqref{eqn:sumw2}, this is at least
\begin{align}
	 & \frac{t(r(k) - r(t))}{t-k}  + \sum_{i \in [t]}  \left( r(t) - \frac{r(k) - r(t)}{t-k}  \right)w(v_{i,r(t)}) \nonumber \\
\ge & \frac{t(r(k) - r(t))}{t-k} +   \left( r(t) - \frac{r(k) - r(t)}{t-k}  \right)\frac{t}{t-k+1} 
\nonumber \\
	=  & \frac{tr(k)}{t-k+1}  =  \frac{t\widetilde{\delta}}{t-k+1}   \ge n
\nonumber 
\end{align}
contradicting~\eqref{eqn:Farkas1}.
The proof of Theorem~\ref{thm:fractionalKq} is completed.
\end{proof}

Note that the inequality above suggests that for $k \ge 3$, we would have $\phi^{\ast,k}(t,n) = \widetilde{\delta} \le \lceil (t-k+1) n/ t \rceil$.
However, our proof requires that $1 \le r(t) = n - \binom{t-1}{k-1}(n - \widetilde{\delta}) $ implying that $\widetilde{\delta} \ge \left( 1- \binom{t-1}{k-1}^{-1} \right) n+1$.

\section{Proof of Theorem~\ref{thm:hajnalszemeredi} and Theorem~\ref{thm:partite}} \label{sec:proof}

First we need the following simple proposition.

\begin{prp} \label{prp:close}
Let $\gamma >0$.
Let $H$ be a balanced $t$-partite $k$-graph with partition classes $V_1$, \dots, $V_t$, each of size $n$ with 
\begin{align}
\widetilde{\delta}_{k-1}(H) \ge \left( 1 - \left( \binom{t-2}{k-1} + 2 \binom{t-2}{k-2} \right)^{-1} + \gamma\right)n. \nonumber
\end{align}
Then, for $i \in [t]$ and distinct vertices $u, v \in V_i$, there are at least $(\gamma n)^{t-1}$ legal $[t]\backslash i$-sets $T$ such that $T \cup u$ and $T \cup v$ span copies of $K_t^k$ in~$H$.
\end{prp}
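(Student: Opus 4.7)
The plan is a greedy (sequential) selection. Fix distinct $u,v \in V_i$ and an ordering $j_1,\ldots,j_{t-1}$ of $[t]\setminus\{i\}$; I will pick $w_{j_s}\in V_{j_s}$ in turn for $s=1,\ldots,t-1$ and set $T=\{w_{j_1},\ldots,w_{j_{t-1}}\}$. The rule imposed at step $s$ is that $w_{j_s}$ must extend every legal $(k-1)$-subset $S$ of $\{u,v,w_{j_1},\ldots,w_{j_{s-1}}\}$ to an edge of $H$. Because every $k$-subset of $\{u\}\cup T$ and of $\{v\}\cup T$ becomes fully determined at the moment its latest-chosen vertex is picked, and the rule enforces exactly those $k$-subsets to be edges, the resulting $T$ automatically has the property that both $\{u\}\cup T$ and $\{v\}\cup T$ span copies of $K_t^k$.

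To count the valid choices at step $s$, observe that since $u$ and $v$ share the class $V_i$, no legal $(k-1)$-subset of $\{u,v,w_{j_1},\ldots,w_{j_{s-1}}\}$ contains both; such subsets therefore split into three groups --- those containing $u$, those containing $v$, and those contained in $\{w_{j_1},\ldots,w_{j_{s-1}}\}$ --- of sizes $\binom{s-1}{k-2}$, $\binom{s-1}{k-2}$, and $\binom{s-1}{k-1}$ respectively. Writing $N=\binom{t-2}{k-1}+2\binom{t-2}{k-2}$, the hypothesis gives $n-\widetilde{\delta}_{k-1}(H)\le (1/N-\gamma)n$, so each such constraint excludes at most $(1/N-\gamma)n$ vertices from $V_{j_s}$. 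Since $s-1\le t-2$ and $\binom{a}{k-1}+2\binom{a}{k-2}$ is non-decreasing in $a$, the union bound gives at most $N(1/N-\gamma)n=n-N\gamma n$ excluded vertices, leaving at least $N\gamma n\ge \gamma n$ valid choices for $w_{j_s}$. Multiplying across the $t-1$ steps produces at least $(\gamma n)^{t-1}$ good sets $T$, which is exactly what is claimed.

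The only delicate point is bookkeeping: the expression $\binom{t-2}{k-1}+2\binom{t-2}{k-2}$ in the hypothesis is tuned precisely so that the codegree-constraint count $\binom{s-1}{k-1}+2\binom{s-1}{k-2}$ at the final step $s=t-1$ equals $N$, making the greedy bound tight. Once the three types of $(k-1)$-subsets are identified correctly (which relies on the legality of $S$ forcing $\{u,v\}\not\subset S$), no further obstacle arises.
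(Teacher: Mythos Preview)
Your proposal is correct and follows essentially the same greedy selection as the paper's proof: pick $w_{j_s}\in V_{j_s}$ one at a time so that every legal $(k-1)$-subset of $\{u,v,w_{j_1},\ldots,w_{j_{s-1}}\}$ is extended to an edge, and multiply the $\ge\gamma n$ choices over the $t-1$ steps. The paper's version is terser---it simply asserts ``there are at least $\gamma n$ choices for each $w_i$''---whereas you spell out the constraint count $\binom{s-1}{k-1}+2\binom{s-1}{k-2}\le N$ and the resulting union bound explicitly, which is a welcome clarification.
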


\begin{proof}
Let $u,v \in V_1$.
For $2 \le i \le t$, we pick $w_i \in V_i$ such that $w_i \in N(T)$ for all legal $(k-1)$-sets $T \subset \{ u,v, w_2, \dots, w_{i-1}\}$.
By the definition of $\widetilde{\delta}_{k-1}(H)$, there are at least $\gamma n$ choices for each~$w_i$.
The proposition easily follows.
\end{proof}

Using Proposition~\ref{prp:close}, we obtain an absorption lemma. 
Its proof can be easily obtained by modifying the proof of Lemma~4.2 in~\cite{lo2011perfect}.
For the sake of completeness, it is included in Appendix~A.

\begin{lma}[Absorption lemma] \label{lma:absorptionlemma}
Let $2 \le k < t$ be integers and let $\gamma >0$.
Then, there is an integer $n_0$ satisfying the following: for each balanced $t$-partite $k$-graph $H$ with each class of size $n \ge n_0$ and 
\begin{align}
\widetilde{\delta}_{k-1}(H) \ge \left( 1 - \left( \binom{t-2}{k-1} + 2 \binom{t-2}{k-2} \right)^{-1} + \gamma\right)n, \nonumber
\end{align}
there exists a balanced vertex subset $U \subset V(H)$ of size $|U| \le \gamma^{t(t-1)} n/(t^2 2^{t+2})$ such that there exists a perfect $K_t^k$-matching in $H[U \cup W]$ for every balanced vertex subset $W \subset V \backslash U$ of size $|W|  \le \gamma^{2t(t-1)} n/(t^2 2^{2t+5})$.
\end{lma}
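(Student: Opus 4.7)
The plan is to follow the R\"odl--Ruci\'nski--Szemer\'edi absorption method, with Proposition~\ref{prp:close} providing the key ``switching'' input.

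For a balanced $t$-set $w=\{w_1,\dots,w_t\}$ with $w_i\in V_i$, I call a vertex set $A$ an \emph{absorber for $w$} if $|A|=t^2$, $A$ is balanced across the $V_i$, and both $H[A]$ and $H[A\cup w]$ admit perfect $K_t^k$-matchings. I would construct such absorbers explicitly as follows: pick a $K_t^k$-copy $\{v_1,\dots,v_t\}$ disjoint from $w$, and for each $i\in[t]$ pick a legal $(t-1)$-set $T_i\in V_{[t]\setminus i}$ such that both $T_i\cup\{v_i\}$ and $T_i\cup\{w_i\}$ span $K_t^k$'s, with all $T_i$ pairwise disjoint and disjoint from $\{v_1,\dots,v_t\}\cup w$. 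Setting $A_w=\{v_1,\dots,v_t\}\cup\bigcup_i T_i$, the family $\{T_i\cup\{v_i\}:i\in[t]\}$ is a perfect $K_t^k$-matching of $A_w$, while $\{\{v_1,\dots,v_t\}\}\cup\{T_i\cup\{w_i\}:i\in[t]\}$ is a perfect $K_t^k$-matching of $A_w\cup w$.

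The next step is to count absorbers for each $w$. Iterating the codegree hypothesis produces $\Omega(n^t)$ choices of $K_t^k$-copy $\{v_1,\dots,v_t\}$ avoiding $w$. For each $i$, Proposition~\ref{prp:close} applied to the pair $(w_i,v_i)$ gives at least $(\gamma n)^{t-1}$ switching $(t-1)$-sets $T_i$; deducting the $O(1)$ forbidden vertices needed for pairwise disjointness is only a lower-order correction. Hence each $w$ admits at least $c\,\gamma^{t(t-1)}n^{t^2}$ absorbers for some $c=c(k,t)>0$.

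I would then perform a standard random selection: include each absorber independently with probability $p=\beta n^{1-t^2}$ for a small constant $\beta=\beta(k,t,\gamma)$. Markov's inequality bounds the expected total size by $O(\beta n)$, and a Chernoff bound combined with a union bound over the $n^t$ possible $w$'s shows that, with positive probability, every $w$ retains at least $\tfrac12 p\,c\gamma^{t(t-1)}n^{t^2}=\Omega(\beta\gamma^{t(t-1)}n)$ absorbers, while the expected number of pairs of intersecting absorbers is $O(\beta^2 n)$. Deleting one absorber from each intersecting pair yields a pairwise disjoint family $\mathcal{F}$; with $\beta$ chosen small enough, $U:=\bigcup_{A\in\mathcal{F}}A$ is a balanced vertex subset satisfying $|U|=t^2|\mathcal{F}|\le\gamma^{t(t-1)}n/(t^2 2^{t+2})$, and every balanced $t$-set $w$ still admits at least $\gamma^{2t(t-1)}n/(t^3 2^{2t+5})$ absorbers in $\mathcal{F}$.

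Finally, given any balanced $W\subset V\setminus U$ with $|W|\le\gamma^{2t(t-1)}n/(t^2 2^{2t+5})$, partition $W$ into $m=|W|/t$ balanced $t$-sets and greedily assign a distinct absorber from $\mathcal{F}$ to each (possible since $m\le\gamma^{2t(t-1)}n/(t^3 2^{2t+5})$ is at most the guaranteed count per $w$). Combining the ``with-$w$'' matching on each used $A\cup w$ with the ``without-$w$'' matching on every unused absorber produces the required perfect $K_t^k$-matching of $H[U\cup W]$. The delicate point is the simultaneous concentration: the per-$w$ expected absorber count is $\Omega(n)$, which comfortably beats the union bound over $n^t$ balanced $t$-sets via Chernoff, but matching the precise constants $\gamma^{t(t-1)}/(t^2 2^{t+2})$ and $\gamma^{2t(t-1)}/(t^2 2^{2t+5})$ required in the statement needs care when tuning $\beta$.
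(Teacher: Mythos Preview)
Your proposal is correct and follows essentially the same absorption argument as the paper: Proposition~\ref{prp:close} supplies the switching $(t-1)$-sets, absorbers are counted, selected randomly with Chernoff/union bounds, intersecting pairs are deleted, and the surviving family greedily absorbs any small balanced leftover. The only cosmetic difference is that the paper's absorbers have size $t(t-1)$ rather than your $t^2$---it anchors the first switching set on a vertex $v_1$ of the target set itself instead of introducing a separate $K_t^k$-copy $\{v_1,\dots,v_t\}$---but this changes nothing in the analysis or the constants.
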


Our next task is to find a large $K_t^k$-matching in $H$ covering all but at most $\varepsilon n$ vertices, which requires a theorem of Frankl and R{\"o}dl~\cite{MR829351} and Chernoff's inequality.
The proof of Lemma~\ref{lma:approximate} is based on Claim~4.1 in~\cite{alon2011large}.
For constants $a,b,c >0$, write $a = b \pm c$ for $b-c \le a \le b+c$.

\begin{thm}[Frankl and R{\"o}dl~\cite{MR829351}] \label{thm:FranklRodl}
For all $t, \varepsilon \ge 0$ and $a >3$, there exists $\tau = \tau(\varepsilon)$, $D = D(n)$, and $n_0 = n_0 (\tau)$ such that if $n \ge n_0$ and $H$ is a $t$-graph of order~$n$ satisfying
\begin{enumerate}
\item $\deg^H(v) = (1 \pm \tau) D$ for all $v \in V$, and 
\item $\Delta_2(H) = \max_{T \in \binom{V(H)}{2}} \deg^{H}(T)< D/ (\log n )^a$
\end{enumerate}
then $H$contains a matching $M$ covering all but at most $\varepsilon n$ vertices. 
\end{thm}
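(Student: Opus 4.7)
The plan is to use the R\"odl nibble (semi-random method), the standard tool for matching results in nearly regular hypergraphs with small codegree. One constructs $M$ through a sequence of $\Theta(\log(1/\varepsilon))$ random rounds: each round extracts a partial matching that consumes a positive constant fraction of the currently uncovered vertices, while the restriction of the hypergraph to the uncovered vertices still satisfies almost-regularity and a bounded-codegree ratio, only with slightly worse constants. After enough rounds the uncovered set has size at most $\varepsilon n$, and the union of the partial matchings is the matching $M$ claimed in the theorem.

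First I would fix a small constant $p = p(\varepsilon,\tau) > 0$ and iterate the following random step. Given a residual $t$-graph $H'$ on vertex set $V'$ with approximate degree $D'$, include each edge of $H'$ independently with probability $p/D'$, obtaining a random edge set $E^{\ast}$. Call two edges of $E^\ast$ \emph{conflicting} if they share a vertex, and let $M'$ be the matching obtained from $E^\ast$ after discarding all edges that participate in a conflict. A first-moment computation using condition (i) shows that $\Pr[v \in V(M')]$ is, up to the $\tau$-slack, approximately $p\, e^{-p}$, so the uncovered vertex set shrinks by a positive constant factor in expectation. To continue the iteration one must argue that $H'[V' \setminus V(M')]$ is still almost regular: for a fixed edge $e$, the probability that $e$ survives the round factors approximately as $\prod_{u \in e} \Pr[u \text{ uncovered}]$, which by near-independence is close to $e^{-tp}$, so the expected new degree at any surviving vertex $v$ is $D'' \approx D'\, e^{-(t-1)p}$, the same for every $v$.

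Showing concentration of the actual new degree around $D''$ is exactly where condition (ii) is invoked. The codegree bound $\Delta_2(H) < D/(\log n)^a$ ensures that the inclusion of any single edge in $E^\ast$ can change only a few other vertex-survival events, so the martingale differences in an Azuma--Hoeffding edge-exposure argument (or, alternatively, in the Kim--Vu polynomial concentration inequality) remain small enough that the deviation is $o(D')$ with probability $1 - n^{-\omega(1)}$, strong enough for a union bound over $V'$. A parallel argument controls how many previously valid edges survive, so that $H'[V'\setminus V(M')]$ satisfies the inductive hypothesis of the next round with a slightly enlarged $\tau$ and proportionally shrunken $D'$.

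The main obstacle, and the reason the codegree must be smaller than $D$ by a polylogarithmic factor, is controlling the accumulation of error across the $\Theta(\log(1/\varepsilon))$ rounds. Each iteration multiplies the relative error of the degrees and the relative codegree by a bounded factor, so in principle the slack could be exhausted after $\Theta(\log n)$ rounds; however the initial budget of $(\log n)^{-a}$ with $a > 3$ comfortably absorbs a logarithmic number of rounds, with room left to dominate the lower-order terms. One picks a sequence of parameters $p_i$ and $\tau_i$ so that the uncovered set decays geometrically until it has size at most $\varepsilon n$, and a union bound over all rounds and vertices shows that with positive probability some realization of the nibble produces a matching with the claimed covering property.
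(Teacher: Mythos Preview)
This theorem is not proved in the paper at all: it is quoted from Frankl and R\"odl~\cite{MR829351} and used as a black box in the proof of Lemma~\ref{lma:approximate}. There is therefore no in-paper argument to compare your proposal against.

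That said, your outline is the standard R\"odl nibble and is essentially the method by which results of this type (including the original Frankl--R\"odl theorem) are established, so the strategy is correct. Two minor points of caution on the sketch itself. First, in the original nibble one typically keeps track not just of the surviving degree but of a whole suite of quantities (e.g.\ the number of surviving edges through a surviving pair), and in some presentations the codegree condition is propagated explicitly rather than being inherited for free; your sentence ``A parallel argument controls how many previously valid edges survive'' hides exactly this bookkeeping. Second, the number of rounds is a constant depending only on $\varepsilon$ and $t$, not $\Theta(\log n)$, so the error accumulation is over constantly many steps; the polylogarithmic gap in (ii) is used within a single round to get concentration of vertex degrees (via a second-moment or martingale bound), not to survive logarithmically many rounds. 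Neither point is fatal, but a full write-up would need to make them precise.
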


\begin{lma}[Chernoff's inequality (see e.g.~\cite{MR1885388})] \label{lma:Chernoff}
Let $X \sim Bin(n,p)$.
Then, for $0 < \lambda \le np$
\begin{align*}
\mathbb{P}(|X-np| \ge \lambda) \le 2 \exp\left( -\frac{\lambda^2}{4 np}\right) \textrm{ and }
\mathbb{P}(X \le np- \lambda) \le \exp\left( -\frac{\lambda^2}{4 np}\right).
\end{align*}
\end{lma}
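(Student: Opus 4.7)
The plan is to apply the standard Chernoff exponential-moment method. Writing $X = \sum_{i=1}^n X_i$ as a sum of independent Bernoulli$(p)$ variables, for any real $s>0$ Markov's inequality applied to the non-negative random variable $e^{sX}$ gives
\begin{align*}
\mathbb{P}(X \ge np + \lambda) \le e^{-s(np+\lambda)}\,\mathbb{E}[e^{sX}] = e^{-s(np+\lambda)}\bigl(1+p(e^s-1)\bigr)^n.
\end{align*}
Using the inequality $1+x \le e^x$ on the last factor, the right-hand side is at most
\begin{align*}
\exp\bigl(np(e^s-1) - s(np+\lambda)\bigr) = \exp\bigl(np(e^s-1-s) - s\lambda\bigr).
\end{align*}

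Next I would invoke the elementary estimate $e^s - 1 - s \le s^2$ valid for $s \in [0,1]$, which reduces the exponent to $nps^2 - s\lambda$. Choosing $s = \lambda/(2np)$ — which by the hypothesis $\lambda \le np$ lies in $(0, 1/2]$, hence in the admissible range — makes this expression equal to $-\lambda^2/(4np)$. This yields the one-sided upper-tail bound
\begin{align*}
\mathbb{P}(X \ge np + \lambda) \le \exp\!\left(-\frac{\lambda^2}{4np}\right).
\end{align*}

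For the lower tail I would repeat the argument with $-X$ in place of $X$ (equivalently, choose $s<0$): the identical computation, using $e^{-s} - 1 + s \le s^2$ on the same interval, produces the same bound $\exp(-\lambda^2/(4np))$ on $\mathbb{P}(X \le np - \lambda)$, which is precisely the second inequality stated. The two-sided inequality then follows from a union bound over the two tails, accounting for the leading factor of~$2$.

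The only delicate point is getting the constant right: the combination of the crude bound $e^s - 1 - s \le s^2$ with the choice $s = \lambda/(2np)$ is what produces the denominator $4np$ (a sharper analysis would give $2np$, but this weaker form is what is stated and suffices). The hypothesis $\lambda \le np$ is used precisely to keep $s \le 1/2$, so that the quadratic estimate for $e^s-1-s$ is valid. No further ingredients are needed; the argument is a textbook application of Markov's inequality to the moment generating function of a binomial, and the result is quoted from the cited reference rather than proved in the paper.
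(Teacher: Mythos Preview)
Your proof is correct: it is the standard exponential-moment (Chernoff) argument, and the constant $4np$ in the denominator falls out exactly as you describe from the crude bound $e^s-1-s\le s^2$ on $[0,1]$ combined with the choice $s=\lambda/(2np)$, with the hypothesis $\lambda\le np$ ensuring $s\le 1/2$. As you yourself note at the end, the paper does not prove this lemma at all---it is simply quoted with a reference to Alon--Spencer---so there is no proof in the paper to compare against; your argument is precisely the textbook derivation one would find in such a reference.
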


\begin{lma} \label{lma:approximate}
Let $2 \le k \le t$ be integers.
Then, for any given $\varepsilon, \gamma >0$, there exists an integer $n_0$ such that every $t$-partite $k$-graph $H$ with partition classes $V_1$, \dots, $V_t$, each of size $n >n_0$, with
\begin{align}
\widetilde{\delta}_{k-1}(H) \ge \phi^{\ast,k}(t,n) + \gamma n \nonumber
\end{align}
contains a $K_t^k$-matching $\mathcal{T}$ covering all but at most $\varepsilon n$ vertices. 
\end{lma}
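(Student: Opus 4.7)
The plan is to apply the Frankl--R\"odl theorem (Theorem~\ref{thm:FranklRodl}) to an auxiliary $t$-graph $\mathcal{H}$ on vertex set $V(H)$ whose edges are (vertex sets of) selected copies of $K_t^k$ in $H$, so that a matching in $\mathcal{H}$ is automatically a $K_t^k$-matching in $H$ on the same vertex set. A matching in $\mathcal{H}$ covering all but $\le \varepsilon' tn$ vertices then yields a $K_t^k$-matching of $H$ missing at most $\varepsilon n$ vertices upon setting $\varepsilon' = \varepsilon/t$. This follows the strategy of Claim~4.1 in Alon, Frankl, Huang, R\"odl, Ruci\'nski and Sudakov~\cite{alon2011large}.

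\textbf{Step 1: smooth fractional matching.} Because $\widetilde{\delta}_{k-1}(H) \ge \phi^{*,k}(t,n) + \gamma n$, the definition of $\phi^{*,k}$ yields a perfect fractional $K_t^k$-matching $w \colon \mathcal{K}_t^k(H) \to [0,1]$ with $\sum_{T \ni v} w(T) = 1$ for every $v \in V(H)$. Using the $\gamma n$ slack in the codegree condition, I arrange $w$ to be \emph{pairwise smooth}, in the sense that $w(T) = O(n^{-(t-1)})$ for every $T$, and moreover $\sum_{T \supset \{u,v\}} w(T) = o(1/(\log n)^a)$ for every legal pair $\{u,v\}$. Pairwise smoothness is arranged by taking a convex combination of $w$ with a ``uniform'' fractional $K_t^k$-matching (weighting each copy inversely proportionally to the number of $K_t^k$'s through its vertices); such a uniform matching is perfect precisely because the slack $\gamma n$ is available to absorb the small imbalances.

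\textbf{Step 2: random sparsification and Frankl--R\"odl.} Let $q = q(n)$ satisfy $q \gg (\log n)^{2a+2}$ and $q \cdot w(T) \le 1$ for all $T$ (possible by the $w(T) = O(n^{-(t-1)})$ bound from Step~1). Form a random $t$-graph $\mathcal{H}$ on $V(H)$ by including each $T \in \mathcal{K}_t^k(H)$ independently with probability $q \cdot w(T)$. Then $\mathbb{E}[\deg_{\mathcal{H}}(v)] = q$ for every $v$, and $\mathbb{E}[\deg_{\mathcal{H}}(\{u,v\})] = q \sum_{T \supset \{u,v\}} w(T) = o(q/(\log n)^a)$ for every pair. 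By Chernoff's inequality (Lemma~\ref{lma:Chernoff}) together with a union bound over the $tn$ vertices and the $O(n^2)$ legal pairs, with positive probability every vertex-degree of $\mathcal{H}$ lies in $(1 \pm \tau) q$ and $\Delta_2(\mathcal{H}) \le q/(\log n)^a$. Both hypotheses of Theorem~\ref{thm:FranklRodl} hold with $D = q$, producing a matching in $\mathcal{H}$ covering all but at most $\varepsilon' tn$ vertices, as desired.

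\textbf{Main obstacle.} The technical heart of the argument is the pairwise smoothness in Step~1: without the $\gamma n$ slack, the perfect fractional matching polytope could be essentially rigid, leaving no freedom to redistribute weight and shrink the pair-sums $\sum_{T \supset \{u,v\}} w(T)$. The slack is what allows a uniform fractional matching (or a sub-hypergraph obtained by random edge-deletion) to remain a perfect fractional matching after mild perturbation, and this is what one averages against the original $w$. Once pairwise smoothness is in hand, the Chernoff computation followed by Frankl--R\"odl is routine.
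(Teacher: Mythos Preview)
Your overall strategy---build an auxiliary $t$-graph on $V(H)$ whose edges are selected copies of $K_t^k$, verify the hypotheses of Theorem~\ref{thm:FranklRodl}, and extract a near-perfect matching---matches the paper, and Step~2 is fine once Step~1 is in hand. The genuine gap is Step~1: you have not shown that a \emph{perfect} fractional $K_t^k$-matching can be made pairwise smooth. The ``uniform'' weighting you propose puts mass on each $T\in\mathcal{K}_t^k(H)$ proportional to some inverse count, but then $\sum_{T\ni v}w_u(T)$ is proportional to the number of $K_t^k$'s through $v$; under the codegree hypothesis these counts can differ by a fixed constant factor across vertices, not by $1+o(1)$, so $w_u$ is neither perfect nor $(1\pm o(1))$-nearly so, and a convex combination with it destroys the vertex-regularity you need for Frankl--R\"odl. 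The slack $\gamma n$ is a slack in the codegree hypothesis, not in the linear equations defining a perfect fractional matching, and you give no concrete mechanism by which it ``absorbs'' the imbalances of~$w_u$.

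The paper---and in fact Claim~4.1 of~\cite{alon2011large}, which you cite but whose actual argument differs from yours---does not try to smooth a single global fractional matching. It samples $n^{1.1}$ independent random balanced sub-systems $R'(1),\dots,R'(n^{1.1})$, each with classes of size $m\approx n^{0.1}$; \emph{here} is where the $\gamma n$ slack is spent, via Chernoff, to ensure that each $H[R'(i)]$ still has $\widetilde{\delta}_{k-1}\ge(\phi^{\ast}+\gamma/4)m$ and hence carries its own perfect fractional matching~$w^i$. The random placement of the $R'(i)$ (each vertex lies in $(1+o(1))n^{0.2}$ of them, each pair in at most two, each triple in at most one) is what delivers the pairwise smallness---no smoothing of any individual $w^i$ is needed. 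A second randomisation then includes each legal $t$-set $T$ with probability $w^{i_T}(T)$, yielding an auxiliary $t$-graph with $\deg(v)=(1+o(1))n^{0.2}$ and $\Delta_2\le n^{0.1}$, to which Theorem~\ref{thm:FranklRodl} applies directly.
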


\begin{proof}
Fix $k$, $t$ and $\varepsilon$.
If $k=t=2$, then the lemma easily holds and so we may assume that $t \ge 3$.
Write $\phi^{\ast} = \phi^{\ast,k}(t,n)/n$.
We assume that $n$ is sufficiently large throughout the proof.
Let $H$ be a balanced $t$-partite $k$-graph $H$ with partition classes $V_1$, \dots, $V_t$, each of size $n$, with $\widetilde{\delta}_{k-1}(H) \ge (\phi^{\ast} + \gamma) n$.
Our aim is to define a $t$-graph $H^{\ast}$ on vertex set $V(H)$ satisfying the condition of Theorem~\ref{thm:FranklRodl}, where every edge in $H^{\ast}$ corresponds to a $K_t^k$ in~$H$.
Hence, by Theorem~\ref{thm:FranklRodl}, there exists a matching $M$ covering all but at most $\varepsilon n$ vertices of $H^{\ast}$ corresponding to a $K_t^k$-matching in~$H$.

We are going to construct $H^{\ast}$ via two rounds of randomisation. 
For $i \in [t]$, let $R_i$ be a random binomial subset of $V_i$ with probability $p = n^{-0.9}$.
Let $R = (R_1, \dots, R_t)$ .
Then, by Chernoff's inequality (Lemma~\ref{lma:Chernoff})
\begin{align}
	\mathbb{P}(|R_i - n^{0.1}| \ge n^{0.075}) \le 2 \exp(-n^{0.05}/2). \label{eqn:|R_i|}
\end{align}
For each $I \in \binom{[t]}{k-1}$, each $I$-legal set $T \subset R$ and $i \in [t] \backslash I$
\begin{align*}
	\mathbb{E}(\deg^{H[R]}_i(T)) \ge (\phi^{\ast}+\gamma) n \times n^{-0.9} = (\phi^{\ast}+\gamma) n^{0.1}.
\end{align*}
Again, by Chernoff's inequality (Lemma~\ref{lma:Chernoff})
\begin{align}
	\mathbb{P}(\deg^{H[R]}_i(T) <  (\phi^{\ast}+\gamma/2) n^{0.1}) \le \exp(-\gamma^2 n^{0.1}/(16(\phi^{\ast} + \gamma))) = e^{ - \Omega ( n^{0.1} )}. \label{eqn:deg[T]}
\end{align}
Let $m =  n^{0.1} - n^{0.075}$.
Let $R_i'$ be a randomly chosen $m$-set in $R_i$ and let $R' = (R'_1, \dots, R'_t)$.
By \eqref{eqn:|R_i|} and~\eqref{eqn:deg[T]}, we have with probability $1 - e^{- \Omega(n^{0.05})}$
\begin{align*}
	\widetilde{\delta}_{k-1}(H[R'])  \ge (c+\gamma/2) n^{0.1} - 2 n^{0.075}\ge (c+\gamma/4) m.
\end{align*}
Since $R'_i$ is chosen randomly from $R_i$, which is also chosen randomly, a given element is chosen in $R'_i$ with probability $m/n = n^{-0.9}- n^{-0.925}$ minus an exponentially small correction term.
Hence we may assume that for $v \in V$
\begin{align*}
 n^{-0.9} \ge \mathbb{P}(v \in R') \ge (1-2n^{-0.025}) n^{-0.9}.
\end{align*}
Now, we take $n^{1.1}$ independent copies of $R'$ and denote them by $R'(1)$, $R'(2)$, \dots, $R'(n^{1.1})$.
For a subset of vertices $S \subset V$, let 
\begin{align}
	Y_S = |\{i : S \subset R'(i)\}|. \nonumber
\end{align}
Since the probability that a particular $R^i$ (not $R'(i)$) contains $S$ is $n^{-0.9 n}$, $ \mathbb{E}(Y_S) \le n^{1.1 - 0.9|S|}$.
With probability at least $1- 2 \exp (-9n^{1.5}/2) $ by Lemma~\ref{lma:Chernoff}, $Y_v =  n^{0.2} \pm  3n^{0.175}$ for every $v \in V$,
where recall that $y = x \pm c$ means $x - c \le y \le x +c$.
Let $Z_2 = |\{ S \in \binom{V}{2} : Y_S \ge 3\}|$ and observe that
\begin{align}
	\mathbb{E} (Z_2) < n^2 \left( n^{1.1}\right)^3 \left( n^{-0.9}\right)^6 = n^{-0.1}. \nonumber
\end{align}
Let $Z_3 = |\{ S \in \binom{V}{3} : Y_S \ge 2\}|$ and observe that
\begin{align}
	\mathbb{E} (Z_3) < n^3 \left( n^{1.1}\right)^2 \left( n^{-0.9}\right)^{6} = n^{-0.2}. \nonumber
\end{align}
The latter implies that every 3-set $S \in \binom{V}3$ lies in at most one $R'(i)$ with high probability.
In summary, there exist $n^{1.1}$ vertex sets $R'(1)$, \dots, $R'(n^{1.1})$ such that
\begin{itemize}
\item[$(i)$] for every $v \in V$, $Y_v =  n^{0.2} \pm  3n^{0.175}$,
\item[$(ii)$] every 2-set $S \in \binom{V}2$ is in at most two sets $R'(i)$,
\item[$(iii)$] every 3-set $S \in \binom{V}3$ is in at most one set $R'(i)$,
\item[$(iv)$] for $i \in [n^{1.1}]$, $R'(i) = (R'_1,\dots R'_t)$ with $R'_j \subset V_j$ and $|R'_j| = m$ for $j \in [t]$,
\item[$(v)$] for $i \in [n^{1.1}]$, $\widetilde{\delta}_{k-1}(H[R'(i)]) \ge (\phi^{\ast}+\gamma/4)m$.
\end{itemize}
Fix one such sequence $R'(1)$, \dots, $R'(n^{1.1})$.

By $(v)$ and the definition of $\phi^{\ast}$, there exists a fractional perfect $K_t^k$-matching $w^{i}$ in $H[R'(i)]$ for $i \in [n^{1,1}]$.
Now we conduct our second round of random process by defining a random $t$-graph $H^{\ast}$ on vertex classes $V$ such that each $[t]$-legal set $T$ is randomly independently chosen with
\begin{align}
	\mathbb{P} (T \in H^{\ast}) = 
\begin{cases}
	w^{i_T}(T)	& \textrm{if $T \in \mathcal{K}_t^k(H[R'(i_T)])$ for some $i_T \in [t]$,}\\
	0	&	\textrm{otherwise}. 
\end{cases} \nonumber
\end{align}
Note that $i_T$ is unique by~$(iii)$ (as $t \ge 3$) and so $H^{\ast}$ is well defined. 
For $v \in V$, let $I_v = \{ i : v \in R'(i)\}$ and so $|I_v| = Y_v = n^{0.2} \pm  3n^{0.175}$ by~$(i)$.
For every $v \in V$, let $E^i_v$ be the set of $K_t^k$ in $H[R'(i)]$ containing~$v$.
Thus, for $v \in V$, $\deg^{H^{\ast}} (v)$ is a generalised binomial random variable with expectation
\begin{align*}
	\mathbb{E}(\deg^{H^{\ast}} (v)) = \sum_{i \in I_v} \sum_{T \in E^i_v} w^i(T) = |I_v| = n^{0.2} \pm  3n^{0.175}.
\end{align*}
Similarly, for every 2-set $\{u,v\}$,
\begin{align*}
	\mathbb{E}(\deg^{H^{\ast}} (u,v)) = \sum_{i \in I_v \cap I_u} \sum_{T \in E^i_v \cap E^i_u} w^i(T) \le |I_v \cap I_u|  \le 2,
\end{align*}
by $(ii)$.
Hence, again by Chernoff's inequality, Lemma~\ref{lma:Chernoff}, we may assume that for every $v \in V$ and every 2-set $\{u,v\}$
\begin{align*}
	\deg^{H^{\ast}} (v) = n^{0.2} \pm  4n^{0.2-\varepsilon}, \qquad
	\deg^{H^{\ast}} (u,v) < n^{0.1}.
\end{align*}
Thus, $H^{\ast}$ satisfies the hypothesis of Theorem~\ref{thm:FranklRodl} and the proof is completed.
\end{proof}

Next we prove Theorem~\ref{thm:hajnalszemeredi} and Theorem~\ref{thm:partite}.
\begin{proof}[Proof of Theorem~\ref{thm:hajnalszemeredi} and Theorem~\ref{thm:partite}]
Fix $k$ and $t$ and $\gamma > 0$.
Let 
\begin{align*}
d = \begin{cases}
(t-1)n/t & \textrm{if $k=2$}\\
\left( 1 - \left( \binom{t-1}{k-1} + 2 \binom{t-2}{k-2} \right)^{-1} \right)n & \textrm{if $k\ge 3$}.
\end{cases}
\end{align*}
Note that $d \ge \phi^{\ast,k}(t,n)$ by Theorem~\ref{thm:fractionalKq}.
Let $H$ be a $t$-partite $k$-graph with vertex classes $V_1$, \dots, $V_t$ each of size $n \ge n_0$ and $\widetilde{\delta}_{k-1}(H) \ge d+\gamma n$.
We are going to show that $H$ contains a perfect $K_t^k$-matching.
Throughout this proof, $n_0$ is assumed to be sufficiently large.
By Lemma~\ref{lma:absorptionlemma}, there exists a balanced vertex set $U$ in $V$ of size $|U| \le \gamma^{t(t-1)} n/(t^2 2^{t+2})$ such that there exists a perfect $K_t^k$-matching in $H[U \cup W]$ for every balanced vertex subset $W \subset V \backslash U$ of size $|W|  \le \gamma^{2t(t-1)} n/(t^2 2^{2t+5})$.
Set $H' = H[V \backslash U]$ and note that $\widetilde{\delta}_{k-1}(H') \ge d+\gamma n/2 \ge (\phi^{\ast,k}(t,n) + \gamma/2) n$.
By Lemma~\ref{lma:approximate}, there exists a $K_t^k$-matching $\mathcal{T}$ in $H'$ covering all but at most $\varepsilon n$ vertices of $H'$, where $\varepsilon =  \gamma^{2t(t-1)} /(t^2 2^{2t+5})$.
Let $W = V(H') \backslash V(\mathcal{T})$, so $W$ is balanced.
Since $H[U \cup W]$ contains a perfect $K_t^k$-matching~$\mathcal{T}'$ by the choice of~$U$, $\mathcal{T} \cup \mathcal{T}'$ is a perfect $K_t^k$-matching in~$H$.
\end{proof}

\section{Proof of Theorem~\ref{thm:main2}} \label{sec:proofofthm:main}

Note that together Lemma~\ref{lma:approximate} and the lemma below imply Theorem~\ref{thm:main2}.
Hence all that remains is to prove Lemma~\ref{lma:makepartite}.

\begin{lma} \label{lma:makepartite}
For integers $t \ge k \ge 2$, there exists $n_0$ such that the following holds.
Suppose that $H$ is a $k$-graph with $n \ge n_0$ vertices with $t |n$.
Then there exists a partition $V_1$, \dots, $V_t$ of $V(H)$ into sets of size $n/t$ such that for every $l \in [k-1]$, every $I \in \binom{[t]}{l}$, every legal $I$-set $T$ and $J \in \binom{[t] \backslash I}{k-l}$, we have 
\begin{align*}
 \frac{t^{k-l}}{(k-l)!}\deg^{H'}_J(T)  \ge \deg^H(T) - 2 (t \ln n)^{1/2} n^{k-l-1/2},
\end{align*}
where $H'$ is the induced $t$-partite $k$-subgraph of $H$ with vertex classes $V_1$, \dots, $V_t$.
\end{lma}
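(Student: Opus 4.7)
The plan is to prove Lemma~\ref{lma:makepartite} by the probabilistic method: sample $(V_1,\dots,V_t)$ uniformly at random among partitions of $V(H)$ into $t$ classes of size $m=n/t$ and show that with probability $1-o(1)$ the partition satisfies the claimed inequality for every choice of $l$, $I$, $J$ and $I$-legal $T$. Since $t$ and $k$ are constants, there are $O(n^{k-1})$ such quadruples, so it suffices to prove an individual failure probability of $o(n^{-(k-1)})$ and take a union bound.

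The first moment is a routine hypergeometric calculation. Fix $l\in[k-1]$, $I\in\binom{[t]}{l}$, $J\in\binom{[t]\setminus I}{k-l}$ and an $l$-subset $T\subseteq V(H)$, and condition on $T$ being $I$-legal. A direct count shows that each fixed $(k-l)$-set $S\subseteq V(H)\setminus T$ is $J$-legal with probability $\tfrac{(k-l)!}{t^{k-l}}(1+O(1/n))$, giving
\[
\mathbb{E}\!\left[\tfrac{t^{k-l}}{(k-l)!}\deg^{H'}_J(T)\bigm| T\text{ is }I\text{-legal}\right]=\deg^H(T)\bigl(1+O(1/n)\bigr),
\]
with bias $O(n^{k-l-1})$, which is well below the allowed slack $2(t\ln n)^{1/2}n^{k-l-1/2}$. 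So it suffices to establish concentration of $\deg^{H'}_J(T)$ about its conditional mean.

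For concentration the cleanest route is via Hoeffding's inequality for sampling without replacement. In the ``flat'' case $l=k-1$ (the only case actually needed to feed Lemma~\ref{lma:approximate}), $g:=\deg^{H'}_J(T)$ is exactly a hypergeometric variable: it is the number of elements of $N^H(T)\subseteq V(H)\setminus T$ that land in the single class $V_{j^{*}}$, where $\{j^{*}\}=J$. Hoeffding therefore gives $\mathbb{P}[|g-\mathbb{E}g|\ge\lambda]\le 2\exp(-2\lambda^2/m)$; plugging in $\lambda=\tfrac{1}{t}\cdot 2(tn\ln n)^{1/2}=2(n\ln n/t)^{1/2}$ and $m=n/t$ yields failure probability at most $2n^{-8}$ per $T$, with enormous room for the union bound. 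For general $l\in[k-1]$ one reveals the classes $V_{j_1},\dots,V_{j_{k-l}}$ one at a time and applies Hoeffding at each step to control the incremental change in $g$; alternatively one passes to the independent-coloring model (each vertex independently assigned to a uniform bin in $[t]$), in which indicators attached to vertex-disjoint $S$'s are independent and a Bernstein-type inequality using the variance bound $\mathrm{Var}(g)=O(n^{2k-2l-1}/t^{2(k-l)})$ (dominated by pairs $(S,S')$ overlapping in a single vertex) yields failure probability $n^{-\Omega(t)}$. Transferring back to the exact-partition model via Stirling costs only a $\Theta(n^{(t-1)/2})$ factor, harmless for $t\ge k\ge 2$.

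The main technical obstacle is the concentration step for $l<k-1$: the ``$S$ is $J$-legal'' indicator is a degree-$(k-l)$ monomial in the vertex-bin variables, so a naive Azuma/McDiarmid bound using the crude swap-Lipschitz constant $2n^{k-l-1}$ is too weak because it ignores that indicators attached to vertex-disjoint $S$'s are independent. The saving grace is exactly this independence, which keeps $\mathrm{Var}(g)$ small enough that either the nested-Hoeffding route or the independent-coloring-plus-Bernstein bypass delivers the $n^{-\Omega(1)}$ per-quadruple failure probability required. The final union bound over the $O(n^{k-1})$ quadruples then completes the proof.
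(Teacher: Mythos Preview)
Your plan is sound for the case $l=k-1$, and in fact cleaner than the paper's argument there: $\deg^{H'}_J(T)$ is literally hypergeometric, and Hoeffding for sampling without replacement gives exponential tails directly. You are also right that $l=k-1$ is the only case actually needed to feed Lemma~\ref{lma:approximate}, so for the downstream application your sketch is already enough.

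The gap is in the general case $l<k-1$, which the lemma as stated does require. Writing $g=\sum_{S\in N^H(T)}\mathbf 1[S\text{ is }J\text{-legal}]$, the summands are \emph{not} independent, so Bernstein's inequality does not apply, and there is no off-the-shelf ``Bernstein-type inequality'' that yields exponential tails from the variance estimate $\mathrm{Var}(g)=O(n^{2(k-l)-1})$ alone; Chebyshev gives only $O(1/\ln n)$, hopeless against the $O(n^{k-1})$-term union bound. Your ``nested Hoeffding'' alternative is likewise under-specified: revealing $V_{j_1},\dots,V_{j_{k-l}}$ one at a time does not make the increments hypergeometric in any evident way, and a vertex-exposure martingale collapses back to the Lipschitz constant $n^{k-l-1}$ that you yourself flag as too crude.

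The paper closes exactly this gap by turning the structural observation you already made---that indicators attached to vertex-disjoint $S$'s are independent---into an actual decomposition. It properly colours the intersection graph on $N^H(T)$ (edges between $S,S'$ whenever $S\cap S'\ne\emptyset$), which has maximum degree at most $(k-l)\binom{n-1}{k-l-1}$, yielding a partition of $N^H(T)$ into at most $(k-l)n^{k-l-1}$ matchings $M_1,\dots,M_{i_0}$. Within each matching the indicators are jointly independent, so genuine Chernoff applies to each $X_{i,J}=|\{S\in M_i:S\text{ is }J\text{-legal}\}|$ with failure probability $n^{-\Theta(k)}$, which comfortably survives the union bound over $i_0$ matchings and all $l$-sets $T$. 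Summing the per-matching deficits and bounding $\sum_i\sqrt{\mu_{i,J}}\le(i_0\sum_i\mu_{i,J})^{1/2}\le n^{k-l-1/2}$ by Cauchy--Schwarz gives the stated error term. This matching decomposition is precisely the missing step in your sketch; once you insert it in place of the unnamed ``Bernstein-type inequality'', your argument coincides with the paper's.
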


\begin{proof}
First set $m= k-l$ and let $U_1$, \dots, $U_t$ be a random partition of $V$, where each vertex appears in vertex class $U_j$ independently with probability $1/t$.
For a fixed $l$-set $T = \{v_1, \dots, v_l\}$, let $N^H(T)$ be the link hypergraph of $T$. 
Thus, $N^H(T)$ is an $m$-graph with $\deg^H(T)$ edges. 
We decompose $N^H(T)$ into $i_0 \le m n^{m-1}$ nonempty pairwise edge disjoint matchings, which we denote by $M_1$, \dots, $M_{i_0}$.
To see that this is possible consider the auxiliary graph $G$ with $V(G) = E(N^H(T))$, in which for $A, B \in N^H(T)$ $A$ and $B$ are joined in $G$ if and only if $A \cap B \ne \emptyset$.
Since $G$ has maximum degree at most $m \binom{n-1}{m-1}$, $G$ can be properly coloured using at most $m n^{m-1}$ colours, where each colour class corresponds to a matching.

For every edge $E \in N^H(T)$, and every index set $J \in \binom{[t]}{m}$, we say that $E$ is \emph{$J$-good}, if $E$ is $J$-legal with respect to $U_1$, \dots, $U_t$.
Since the partition $U_1$, \dots, $U_t$ was chosen randomly, we have for fixed $J \in \binom{[t]}{m}$ $$ \mathbb{P}(E \textrm{ is $J$-good}) = m!t^{-m} .$$
Thus, for $X_{i,J} = X_{i,J}(T) = |\{ E \in M_i : E \textrm{ is $J$-good} \} |$ we have
\begin{align*}
	\mu_{i,J} = \mu_{i,J}(T) = \mathbb{E}(X_{i,J}) = \frac{m!}{t^m} |M_i|.
\end{align*}
Now call a matching $M_i$ \emph{bad} (with respect to $U_1$, \dots, $U_t$) if there exists a set $J \in \binom{[t]}{m}$ such that
\begin{align*}
	X_{i,J} \le \left(1 - \left( \frac{2(2k-1) \ln n }{\mu_{i,J}}\right)^{1/2} \right) \mu_{i,J}
\end{align*}
and call $T$ a \emph{bad set} if there is at least one bad $M_i = M_i(T)$.
Otherwise call $T$ a \emph{good set}. 
For a fixed $M_i$ the events `$E$ is $J$-good' with $E \in M_i$ are jointly independent, hence by Chernoff's inequality, Lemma~\ref{lma:Chernoff},
\begin{align}
	\mathbb{P}(M_i \textrm{ is bad}) \le \binom{t}{m} \exp(-(2k-1) \ln n) = \binom{t}m n^{-2k+1}. \nonumber
\end{align}
Recall that $i_0 \le m n^{m-1}$ and $m \le k-1$, we have 
$$\mathbb{P}(T \textrm{ is bad}) \le i_0 \binom{t}m n^{-2k+1} \le n^{-k}$$
and by summing over all $l$-sets $T$ we obtain that 
$$\mathbb{P}(\textrm{there exists a bad $l$-set}) \le n^{-1}.$$
Moreover, Chernoff's inequality, Lemma~\ref{lma:Chernoff}, yields
\begin{align*}
\mathbb{P}( |U_{j}| \ge  n/t  + n^{1/2} (\ln n )^{1/4}/t)\le \exp(-(\ln n)^{1/2} /4t).
\end{align*}
Thus with positive probability there is a partition $U_1$, \dots, $U_t$ such that all $l$-sets $T$ are good and 
\begin{align*}
|U_{j}| \le n/t  + n^{1/2} (\ln n )^{1/4}/t \textrm{ for all } j \in [t].
\end{align*}
Consequently, by redistributing at most $n^{1/2} (\ln n )^{1/4}$ vertices of the partition $U_1$, \dots, $U_t$ we obtain an equipartition $V_1$, \dots, $V_t$ with 
\begin{align*}
|V_j| = n/t \textrm{ and } |U_{j} \backslash V_j| \le n^{1/2} (\ln n )^{1/4}/t \textrm{ for all } j \in [t].
\end{align*}
Let $H'$ be the induced $t$-partite $k$-subgraph with vertex classes $V_1$, \dots, $V_t$.
Note that for an $l$-set $I \in \binom{[t]}{l}$, a $I$-legal set $T$ and an $m$-set $J \in \binom{[t] \backslash I}{m}$,
\begin{align*}
\deg_J^{H'} (T) \ge & \sum_{i \in [i_0]} \left(1 - \left( \frac{2 (2k-1) \ln n }{\mu_{i,J}}\right)\right) \mu_{i,J} - m \frac{n^{1/2} (\ln n )^{1/4}}t n^{m-1}\\
\ge & \frac{m!}{t^m} \deg^H_J(T) - \left( 2 (2k-1) \ln n \right)^{1/2} \sum_{i \in [i_0]} \mu_{i,J}^{1/2} - m \frac{n^{1/2} (\ln n )^{1/4}}t n^{m-1}.
\end{align*}
By the Cauchy-Schwarz inequality, we obtain that 
\begin{align*}
 \sum_{i \in [i_0]} \mu_{i,J}^{1/2} \le \left( i_0 \sum_{i \in [i_0]} \mu_{i,J} \right)^{1/2} \le \left( mn^{m-1} \frac{m!}{t^m} \binom{n}m \right)^{1/2} \le n^{m - 1/2}
\end{align*}
Therefore, 
\begin{align*}
\deg^{H}_J (T) \ge & \frac{m!}{t^m} \deg^H_J(T) -  2(k \ln n)^{1/2}n^{m-1/2},
\end{align*}
as required.
\end{proof}

\section{Acknowledgment}
The authors would like to thank the anonymous referee for the helpful comments.

\appendix
\section{Proof of Lemma~\ref{lma:absorptionlemma}}

\begin{proof}
Throughout the proof we may assume that $n_0$ is chosen sufficiently large.
Let $H$ be a balanced $t$-partite $k$-graph with partition classes $V_1$, \dots, $V_t$ each of size $n$ and $\widetilde{\delta}_{k-1}(H) \ge \widetilde{\delta}$, where $\widetilde{\delta}$ is the lower bound on $\widetilde{\delta}_{k-1}(H)$ stated in the lemma.
Let $H'$ be the $t$-partite $t$-graph on $V_1$, \dots, $V_t$ in which $v_1 v_2 \dots v_t \in E(H')$ if and only if $v_1 v_2 \dots v_t$ is a $K_t^k$ in $H$.
Furthermore set $m=t(t-1)$ and call a balanced $m$-set $A$ an \emph{absorbing} $m$-set for a balanced $t$-set $T$ if $A$ spans a matching of size $t-1$ in $H'$ and $A \cup T$ spans a matching of size $t$ in $H'$, in other words, $A \cap T = \emptyset$ and both $H'[A]$ and $H'[A \cup T]$ contain a perfect matching.
Denote by $\mathcal{L}(T)$ the set of all absorbing $m$-sets for $T$.
Next, we show that for every balanced $t$-set $T$, there are many absorbing $m$-sets for $T$.

\begin{clm} \label{clm:numberofabsorbingm-set}
For every balanced $t$-set $T$, $|\mathcal{L}(T)| \ge \gamma^{m} \binom{n}{t-1}^t/2^t $.
\end{clm}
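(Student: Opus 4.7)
The plan is to build absorbing $m$-sets for $T=\{u_1,\ldots,u_t\}$ explicitly and count them with Proposition~\ref{prp:close}. First I would pin down a combinatorial design for an absorber: label the $m=t(t-1)$ vertices of $A$ as $\{a_{j,k}\}_{j\in[t],k\in[t-1]}$ with $a_{j,k}\in V_j$, and fix functions $f_i:[t]\setminus\{i\}\to[t-1]$ such that for each $j$ the map $i\mapsto f_i(j)$ is a bijection $[t]\setminus\{j\}\to[t-1]$ (a Latin-rectangle condition, easily exhibited, e.g.\ via a cyclic construction). The absorber will then automatically decompose into columns $C_k=\{a_{j,k}:j\in[t]\}$ (giving the size-$(t-1)$ matching on $A$) and absorbing edges $E_i=\{u_i\}\cup\{a_{j,f_i(j)}:j\neq i\}$ (giving the size-$t$ matching on $A\cup T$), provided each $C_k$ and each $E_i$ spans a $K_t^k$ in $H$.

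Next I would count such structured absorbers by processing the indices $i\in[t]$ in turn. For each $i$, I would pair $u_i$ with a chosen $z_i\in V_i$ (thinking of $z_i$ as the vertex $a_{i,f_i^{-1}(\cdot)}$ designated in $V_i$) and apply Proposition~\ref{prp:close} to $(u_i,z_i)$, which produces at least $(\gamma n)^{t-1}$ legal $([t]\setminus\{i\})$-sets $S_i$ such that $S_i\cup\{u_i\}$ and $S_i\cup\{z_i\}$ are both $K_t^k$ in $H$. I would then use $S_i\cup\{u_i\}$ as the absorbing edge $E_i$ and use the ``shifted'' $K_t^k$'s $S_i\cup\{z_i\}$ as the building blocks out of which the column matching on $A$ is assembled. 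At each stage, $O(m^2)$ previously chosen vertices are excluded, which kills only a negligible fraction of the valid $S_i$'s, so at least $(\gamma n)^{t-1}/2$ choices remain.

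Combining: each iteration contributes a factor of at least $\tfrac12(\gamma n)^{t-1}\cdot\tfrac{n}{2}$ choices, so the number of ordered $(z_i,S_i)$-tuples producing a valid absorber is at least a constant multiple of $\gamma^{t(t-1)}n^{t^2}$. Dividing by the number of ordered tuples giving the same unordered $m$-set $A$ (the ordering of the $t-1$ cells within each $V_j$ yields $((t-1)!)^t$, and the choice of the designated $z_i\in V_i$ contributes another factor of roughly $n^t/\binom{n}{t-1}^t\cdot((t-1)!)^t$), I obtain the claimed bound $|\mathcal{L}(T)|\ge\gamma^m\binom{n}{t-1}^t/2^t$ after absorbing all explicit constants into the $2^t$.

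The main obstacle is the second step: verifying that the ``shifted'' $K_t^k$'s produced by Proposition~\ref{prp:close} can indeed be organized into a perfect matching of $H'[A]$ via the chosen Latin-rectangle design. Concretely, one has to check that for the chosen $f_i$'s the column $C_k$ is a common extension of the pair $(a_{i^*,k},u_{i^*})\in V_{i^*}$ for $i^*=f^{-1}$-preimage, which is exactly the setting handled by Proposition~\ref{prp:close} with the bound $\binom{t-2}{k-1}+2\binom{t-2}{k-2}$ in the hypothesis of Lemma~\ref{lma:absorptionlemma}. This pair-neighbourhood phenomenon is what lets the hypothesis on $\widetilde{\delta}_{k-1}(H)$ — which is weaker than what a naive ``pick-one-vertex-at-a-time'' argument would need — suffice for the construction.
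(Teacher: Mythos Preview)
Your Latin-rectangle template for an absorber is a valid static design (and the paper's construction is in fact the special case $f_1(j)=j-1$, $f_i\equiv i-1$ for $i\ge 2$), but the iterative counting you describe does not build it. The core problem is the role of $z_i$. You treat $z_i$ as a free choice in $V_i$ (contributing a factor $n/2$), then apply Proposition~\ref{prp:close} to $(u_i,z_i)$ to get $S_i$, and declare the ``shifted'' edge $S_i\cup\{z_i\}$ a building block for the column matching on $A$. But for $S_i\cup\{z_i\}$ to be a column $C_k$ you would need $f_i$ constant (equal to $k$), and you cannot have all $t$ of the $f_i$ constant and still satisfy the Latin condition when $t\ge 3$; equivalently, $t$ shifted edges cannot form a $(t-1)$-edge matching on $A$. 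If instead $z_i$ is meant to be some $a_{i,k}\in A$, it is not a free parameter at all---it is determined by the $S_j$'s for $j\ne i$, so the factor $n/2$ is spurious and your count of order $n^{t^2}$ has no construction behind it. Your final paragraph concedes exactly this obstacle and appeals to Proposition~\ref{prp:close} to certify that the specific set $C_k\setminus\{a_{i^*,k}\}$ is a common extension of a given pair, but that proposition only guarantees \emph{many} common extensions, not that a predetermined one works.

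The paper's argument avoids all of this by deliberately breaking the symmetry. First pick any edge $\{v_1,u_2,\dots,u_t\}$ of $H'$ through $v_1$ (there are at least $(\gamma n)^{t-1}/2$ avoiding the other $v_j$); set $U_1=\{u_2,\dots,u_t\}$. Then for $j=2,\dots,t$ apply Proposition~\ref{prp:close} to the pair $(v_j,u_j)$ to get a $(t-1)$-set $U_j$ (at least $(\gamma n)^{t-1}/2$ choices disjoint from what came before) with both $\{v_j\}\cup U_j$ and $\{u_j\}\cup U_j$ edges of $H'$. Then $A=\bigcup_j U_j$ is absorbing: the column matching on $A$ is $\{\{u_j\}\cup U_j:2\le j\le t\}$, and the matching on $A\cup T$ is $\{\{v_j\}\cup U_j:j\in[t]\}$. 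This yields $((\gamma n)^{t-1}/2)^t=(\gamma n)^m/2^t$ ordered choices, each $A$ arising at most $((t-1)!)^t$ times, giving the bound. The asymmetry---one ``anchor'' edge through $v_1$ producing the pivots $u_j$, then $t-1$ applications of Proposition~\ref{prp:close}---is precisely what makes the columns automatic rather than something to be verified after the fact.
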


\begin{proof}
Let $T= \{v_1, \dots, v_t\}$ be fixed with $v_i \in V_i$ for $i \in[t]$.
By Proposition~\ref{prp:close}, it is easy to see that there exist at least $(\gamma n)^{t-1}$ edges in $H'$ containing $v_1$.
Since $n_0$ was chosen large enough, there are at most $(t-1)n^{t-2} \le (\gamma n)^{t-1}/2$ edges in $H'$ which contain $v_1$ and $v_j$ for some $2 \le j \le t$.
Fix an edge $v_1u_2\dots u_t$ in $H'$ with $u_j \in V_j \backslash \{v_j\}$ for $2 \le j \le t$.
Set $U_1 = \{ u_2, \dots, u_t\}$ and $W_0 = T$.
For each $2 \le j \le t$, suppose we succeed to choose a $(t-1)$-set $U_j$ such that $U_j$ is disjoint from $W_{j-1} = U_{j-1} \cup W_{j-2}$ and both $U_j \cup \{ u_j \}$ and $U_j \cup \{ v_j\}$ are edges in $H'$.
Then for a fixed $2 \le j \le t$ we call such a choice $U_j$ \emph{good}, motivated by $A = \bigcup_{j \in [t]} U_j$ being an absorbing $m$-set for $T$.

Note that in each step $2 \le j \le t$ there are precisely $t+(j-1)(t-1)$ vertices in $W_{j-1}$.
More specifically, for $i \in [t]$, there are at most $j \le t$ vertices in $V_i \cap W_{j-1}$.
Thus, the number of edges in $H'$ intersecting $u_j$ (or $v_j$ respectively) and at least one other vertex in $W_j$ is at most $(t-1) j n^{t-2} < t^2n^{t-2} \le (\gamma n)^{t-1}/2$.
For each $2 \le j \le t$, by Proposition~\ref{prp:close} there are at least $( \gamma n)^{t-1} - (\gamma n)^{t-1}/2  =(\gamma n)^{t-1}/2$ choices for $U_j$ and in total we obtain $(\gamma n)^{m}/2^t$ absorbing $m$-sets for $T$ with multiplicity at most $((t-1)!)^{t}$.
\end{proof}

Now, choose a family $F$ of balanced $m$-sets by selecting each of the $\binom{n}{t-1}^t$ possible balanced $m$-sets independently with probability $$p = \gamma^m n/ \left(t^3 2^{t+3} \binom{n}{t-1}^t \right).$$
Then, by Chernoff's inequality, Lemma~\ref{lma:Chernoff} with probability $1-o(1)$ as $n \rightarrow \infty$, the family $F$ satisfies the following properties:
\begin{align}
|F| \le & \gamma^m n / (t^3 2^{t+2})\label{eqn:|F|}
\intertext{and}
|\mathcal{L}(T) \cap F| \ge & \frac{\gamma^{2m} n}{t^3 2^{2t+4}} \label{eqn:L(T)}
\end{align} 
for all balanced $t$-sets $T$.
Furthermore, we can bound the expected number of intersecting $m$-sets in $F$ by
\begin{align}
	\binom{n}{t-1}^t \times t(t-1) \times \binom{n}{t-2}  \binom{n}{t-1}^{t-1} \times p^2 \le  \frac{ \gamma^{2m} n }{t^3 2^{2t+6}}
\nonumber
\end{align}
Thus, using Markov's inequality, we derive that with probability at least $1/2$
\begin{align}
	\textrm{$F$ contains at most $  \frac{ \gamma^{2m} n }{t^32^{2t+5}}$ intersecting pairs.} \label{eqn:F}
\end{align}
Hence, with positive probability the family $F$ has all properties stated in \eqref{eqn:|F|}, \eqref{eqn:L(T)} and \eqref{eqn:F}.
By deleting all the intersecting balanced $m$-sets and non-absorbing $m$-sets in such a family $F$, we get a subfamily $F'$ consisting of pairwise disjoint balanced $m$-sets, which satisfies
\begin{align}
|\mathcal{L}(T) \cap F'| \ge & \frac{ \gamma^{2m} n }{t^3 2^{2t+4}} - \frac{ \gamma^{2m} n }{t^32^{2t+5}} = \frac{ \gamma^{2m} n }{t^32^{2t+5}} \nonumber
\end{align}
for all balanced $t$-sets $T$.
Let $U = V(F')$ and so $U$ is balanced.
Moreover, $U$ is of size at most $t |V(F')| \le t |V(F)| \le \gamma^m n / (t^2 2^{t+2})$ by~\eqref{eqn:|F|}.
For a balanced set $W \subset V \backslash V(M)$ of size $|W| \le \frac{ \gamma^{2m} n }{t^2 2^{2t+5}}$, $W$ can be partition in to at most $\frac{ \gamma^{2m} n }{t^3 2^{2t+5}}$ balanced $t$-set.
Each balanced $t$-set can be successively absorbed using a different absorbing $m$-set in~$F'$, so there exists a perfect matching in $H'[U \cup W]$. 
Hence, there is a perfect $K_t^k$-matching in $H[U \cup W]$.
\end{proof}

\end{document}